\numberwithin{equation}{section}
\theoremstyle{plain}
\newtheorem{Th}{Theorem}[section]
\newtheorem{Lemma}[Th]{Lemma}
 \theoremstyle{definition}
\newtheorem{Def}[Th]{Definition}
\newtheorem{Conj}[Th]{Conjecture}
\newtheorem{Rem}[Th]{Remark}
\newtheorem{?}[Th]{Problem}
\newcommand{\e}{\varepsilon}
\newcommand{\prm}{\mathrm{pm}}
\begin{document}

\title{Covers, orientations and factors}

\author[P. Csikv\'ari]{P\'{e}ter Csikv\'{a}ri}

\address{MTA-ELTE Geometric and Algebraic Combinatorics Research Group \& ELTE: E\"{o}tv\"{o}s Lor\'{a}nd University \\ Mathematics Institute, Department of Computer 
Science \\ H-1117 Budapest
\\ P\'{a}zm\'{a}ny P\'{e}ter s\'{e}t\'{a}ny 1/C} 

\email{peter.csikvari@gmail.com}

\author[A. Imolay]{Andr\'as Imolay}

\address{ELTE: E\"{o}tv\"{o}s Lor\'{a}nd University \\ H-1117 Budapest
\\ P\'{a}zm\'{a}ny P\'{e}ter s\'{e}t\'{a}ny 1/C}

\email{imolay.andras@gmail.com}

\thanks{The first author was supported by the 
 Marie Sk\l{}odowska-Curie Individual Fellowship grant no. 747430, and before that grant he was partially supported by the
Hungarian National Research, Development and Innovation Office, NKFIH grant K109684 and Slovenian-Hungarian grant NN114614, and by the ERC Consolidator Grant  648017. The second author is partially supported by the New National Excellence Program (\'UNKP) and by the European Union, co-financed by the European Social Fund (EFOP-3.6.3-VEKOP-16-2017-00002).
}

 \subjclass[2010]{Primary: 05C30. Secondary: 05C70, 05C76, 05C45}

 \keywords{Eulerian orientations, factors, half graphs, covers} 

\begin{abstract} Given a graph $G$ with only even degrees let $\varepsilon(G)$ denote the number of Eulerian orientations, and let $h(G)$ denote the number of half graphs, that is, subgraphs $F$ such that $d_F(v)=d_G(v)/2$ for each vertex $v$. Recently, Borb\'enyi and Csikv\'ari proved that  $\varepsilon(G)\geq h(G)$ holds true for all Eulerian graphs with equality if and and only if $G$ is bipartite. In this paper we give a simple new proof of this fact, and we give identities and inequalities for the number of Eulerian orientations and half graphs of a $2$-cover  of a graph $G$.

\end{abstract}

\maketitle

\section{Introduction}  In this paper we study the number of orientations and factors of Eulerian graphs.
Recall that a graph $G$ is Eulerian if every vertex of $G$ has even degree. In the literature it is often assumed that an Eulerian graph is also connected, but we will not require connectedness in this paper.
Let $\varepsilon(G)$ denote the number of Eulerian orientations, that is, the orientations where every vertex has in-degree equal to the out-degree.
Counting Eulerian orientations has triggered considerable interest  in combinatorics, computer science and statistical physics. Probably, the best known result is due to Lieb \cite{lieb2004residual} who determined the asymptotic number of Eulerian orientations of large grid graphs. Schrijver \cite{schrijver1983bounds} gave a lower bound on the number of Eulerian orientations in terms of the degree sequence. Welsh \cite{welsh1999tutte} observed that for a $4$--regular graph the Tutte-polynomial evaluation $|T_G(0,-2)|$  is exactly the number of Eulerian orientations since nowhere-zero $Z_3$-flows and Eulerian orientations are in one-to-one correspondence for $4$--regular graphs. Mihail and Winkler \cite{mihail1992number} gave an efficient randomized algorithm to sample and approximately count Eulerian orientations. 

Let $h(G)$ denote the number of half graphs, that is, subgraphs $F$ satisfying that $d_F(v)=d_G(v)/2$ for every vertex $v$.  Note that $h(G)>0$ if $G$ is not only Eulerian, but each of its connected component has an even number of edges. This condition is clearly necessary to have a half graph, and also sufficient: every second edge of an Eulerian tour will determine a half graph.

Recently, Borb\'enyi and Csikv\'ari \cite{borbenyi2020counting} proved that  $\varepsilon(G)\geq h(G)$ holds true for all Eulerian graphs with equality if and and only if $G$ is bipartite. This inequality fits into a series of inequalities comparing the number of orientations and subgraphs with the same property. For instance, it is known that the number of acyclic orientations is less than or equal to the number of acyclic subgraphs, that is, forests. The paper of Kozma and Moran \cite{kozma2013shattering} contains many more such inequalities, and sometimes equalities. In this paper we give a simple new proof of the fact $\varepsilon(G)\geq h(G)$, and we give identities and inequalities for the number of Eulerian orientations and half graphs of a $2$-cover of a graph $G$. In fact, we study the number of orientations and factors of $2$-covers with prescribed in-degree and degree sequences, respectively.

\subsection{Results.} For an edge set $A\subseteq E(G)$ let $\varepsilon(A)$ denote the number of Eulerian orientations of the graph $(V,A)$. Similarly, let $h(A)$ denote the number of half graphs of the graph $(V,A)$. Our first result is an identity for $\e(G)$ and $h(G)$.

\begin{Th} \label{recursion} Let $G$ be an Eulerian graph with edge set $E$. Then
$$\e(G)^2=\sum_{A\subseteq E}\e(A)\e(E\setminus A)$$
and
$$h(G)^2=\sum_{A\subseteq E}h(A)h(E\setminus A).$$
\end{Th}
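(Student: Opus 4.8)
The plan is to prove both identities simultaneously by exhibiting a bijection between the two sides. Fix the graph $G$ with edge set $E$, and consider two disjoint copies $G_1, G_2$ of $G$ (a trivial $2$-cover, i.e. $G \times K_2$ without the twisting, or rather $2 \cdot G$). I claim that Eulerian orientations of the disjoint union $G_1 \sqcup G_2$ are counted by $\e(G)^2$ on one hand, and on the other hand can be reorganized according to a choice of $A \subseteq E$. The key observation is: given an orientation $\vec{O}$ of $G_1 \sqcup G_2$, for each edge $e \in E$ look at the pair of orientations $(\vec{e}_1, \vec{e}_2)$ of its two copies. There are four possibilities, which split into two types: the two copies are oriented ``the same way'' (both agree with some fixed reference orientation of $e$, or both disagree) or ``oppositely.'' Let $A \subseteq E$ be the set of edges where the two copies point the same way. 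Then swapping, on the copy $G_2$, the orientation of every edge in $E \setminus A$ turns $\vec{O}$ into a pair of orientations that, restricted to $A$, are identical on both copies, and restricted to $E\setminus A$ are identical on both copies — wait, this needs care; the cleaner bookkeeping is the following.

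More precisely, I would set up the bijection as follows. An ordered pair of Eulerian orientations $(\vec{O}_1, \vec{O}_2)$ of $G$ is the same data as one orientation of $G_1 \sqcup G_2$ with each component Eulerian. Reference-orient every edge of $G$. Define $A = \{ e \in E : \vec{O}_1 \text{ and } \vec{O}_2 \text{ orient } e \text{ in the same direction}\}$. Consider the orientation $\vec{P}$ of $(V,A)$ given by the common direction on $A$, and the orientation $\vec{Q}$ of $(V, E\setminus A)$ obtained from $\vec{O}_1$ on $E \setminus A$. The Eulerian condition at a vertex $v$ says that in $\vec{O}_1$ and in $\vec{O}_2$ the in-degree equals the out-degree; subtracting these two balance conditions edge-class by edge-class, one checks that $\vec{P}$ is an Eulerian orientation of $(V,A)$ \emph{and} $\vec{Q}$ is an Eulerian orientation of $(V, E\setminus A)$, because the contribution of the ``same'' edges cancels in the difference while the ``opposite'' edges contribute $\pm 2$. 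Running the same argument for the reversed pair, or keeping track of which component $\vec{O}_1$ versus $\vec{O}_2$ plays, I recover that for each $A$ and each pair $(\vec P, \vec Q)$ of Eulerian orientations of $(V,A)$ and $(V, E\setminus A)$ there is exactly one ordered pair $(\vec O_1, \vec O_2)$ producing it. This yields $\e(G)^2 = \sum_{A \subseteq E} \e(A)\e(E\setminus A)$. The half-graph identity is proved by the verbatim same device, replacing ``orientation'' by ``subgraph,'' ``in-degree $=$ out-degree'' by ``$d_F(v) = d_G(v)/2$,'' and the symmetric difference of two half graphs plays the role of $E \setminus A$: given half graphs $F_1, F_2$ of $G$, set $A = E \setminus (F_1 \triangle F_2)$, let $F_1 \cap A$ be the induced half graph of $(V,A)$ and $F_1 \cap (E\setminus A) = F_1 \triangle F_2 \cap \cdots$ — again a degree-counting check at each vertex, using that $d_{F_1}(v) - d_{F_2}(v) = 0$ splits into cancelling and doubling parts.

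The main obstacle is purely bookkeeping: verifying that the map $(\vec O_1,\vec O_2) \mapsto (A, \vec P, \vec Q)$ is genuinely a bijection rather than, say, a $2$-to-$1$ map, and in particular nailing down which of $\vec{O}_1, \vec{O}_2$ supplies the orientation on $E \setminus A$ so that the inverse is well-defined. Concretely, given $(A, \vec P, \vec Q)$, one must reconstruct $\vec O_1$ by using $\vec P$ on $A$ and $\vec Q$ on $E\setminus A$, and reconstruct $\vec O_2$ by using $\vec P$ on $A$ and the \emph{reversal} of $\vec Q$ on $E \setminus A$; then one checks $\vec O_2$ is Eulerian precisely because reversing all edges of an Eulerian orientation of $(V, E\setminus A)$ is again Eulerian, and that the $A$ computed back from $(\vec O_1, \vec O_2)$ is the original $A$. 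I expect this to go through cleanly, and the same reconstruction (with $\triangle$ in place of reversal) settles the half-graph case. An alternative, slicker phrasing avoids fixing reference orientations entirely by working in $GF(2)$ — encode an orientation as an element of $GF(2)^E$ via its disagreement set with a reference, note the Eulerian condition is an affine subspace, and the identity becomes the statement that an affine subspace $W$ satisfies $|W|^2 = \sum$ over cosets-type decomposition; I would mention this but carry out the bijective proof as the main line since it also covers half graphs uniformly.
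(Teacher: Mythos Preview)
Your proposal is correct and follows essentially the same bijection as the paper: given an ordered pair of Eulerian orientations (respectively, half graphs), partition $E$ according to where the two agree, and observe that the restriction to each piece is again Eulerian (respectively, a half graph). The paper presents the half-graph case first and takes $A$ to be the symmetric difference $S\mathbin{\triangle}T$ rather than its complement, but since the sum is symmetric in $A$ and $E\setminus A$ this is immaterial; your framing via the disjoint union $G\sqcup G$ as a trivial $2$-cover is in fact how the paper later generalizes the identity to arbitrary $2$-covers (Theorem~\ref{mixed 2-lift}).
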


Using the identities of Theorem~\ref{recursion} we can easily give a new proof of the following theorem of Borb\'enyi and Csikv\'ari \cite{borbenyi2020counting}.

\begin{Th}[Borb\'enyi and Csikv\'ari \cite{borbenyi2020counting}] \label{inequality} Let $G$ be an Eulerian graph. Then $\e(G)\geq h(G)$ with equality if and only if $G$ is bipartite. 
\end{Th}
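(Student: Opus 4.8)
The plan is to deduce Theorem~\ref{inequality} from the two identities in Theorem~\ref{recursion} by a term-by-term comparison of the sums $\sum_{A\subseteq E}\e(A)\e(E\setminus A)$ and $\sum_{A\subseteq E}h(A)h(E\setminus A)$. The natural hope is that $\e(A)\e(E\setminus A)\geq h(A)h(E\setminus A)$ holds for every $A\subseteq E$; summing this over all $A$ and taking square roots then gives $\e(G)\geq h(G)$. This cannot be literally true term by term in full generality (for instance if $(V,A)$ is not Eulerian both sides may behave differently), so the first thing I would check is exactly which subsets $A$ contribute to each sum. A subset $A$ contributes a nonzero term to the $\e$-sum only if both $(V,A)$ and $(V,E\setminus A)$ are Eulerian; since $G$ itself is Eulerian, $(V,A)$ Eulerian forces $(V,E\setminus A)$ Eulerian automatically, so the $\e$-sum ranges over those $A$ for which $(V,A)$ is an even subgraph. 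Likewise $h(A)\neq 0$ requires $(V,A)$ to be Eulerian \emph{and} every component to have an even number of edges, a strictly stronger condition, so the support of the $h$-sum is contained in the support of the $\e$-sum. Thus it suffices to prove $\e(A)\ge h(A)$ for every even subgraph $(V,A)$ — but that is just the statement we are trying to prove, applied to the smaller graph $(V,A)$, so a direct term-by-term argument only re-proves the inequality, not a clean induction on its own.

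To get genuine traction I would instead set up an \textbf{induction on the number of edges} and use the identity to isolate a single, easily understood pair of terms. Consider the contributions from $A=\varnothing$ and $A=E$ in each sum: these give $\e(\varnothing)\e(E)+\e(E)\e(\varnothing)=2\e(G)$ in the $\e$-identity (using $\e(\varnothing)=1$ since the empty graph on $V$ has a unique, vacuous Eulerian orientation), and similarly $2h(G)$ in the $h$-identity — wait, $h(\varnothing)=1$ only when the condition ``every component has an even number of edges'' is read correctly for the edgeless graph, which it is. So both identities have the shape $X(G)^2 = 2X(G) + \sum_{\varnothing\subsetneq A\subsetneq E} X(A)X(E\setminus A)$. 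Comparing, $\e(G)^2 - 2\e(G) = \sum_{\varnothing\subsetneq A\subsetneq E}\e(A)\e(E\setminus A)$ and the analogous identity for $h$. If by induction $\e(A)\ge h(A)$ for all proper nonempty even subgraphs $A$ (which have fewer edges), then the right-hand side for $\e$ dominates that for $h$, giving $\e(G)^2-2\e(G)\ge h(G)^2-2h(G)$, i.e. $(\e(G)-h(G))(\e(G)+h(G)-2)\ge 0$; since $\e(G),h(G)\ge 1$ we get $\e(G)+h(G)-2\ge 0$, and if this is strictly positive we conclude $\e(G)\ge h(G)$, closing the induction. The degenerate case $\e(G)+h(G)=2$, i.e. $\e(G)=h(G)=1$, is handled separately (it forces $G$ to have very few edges).

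For the \textbf{equality characterization}, the ``if'' direction is classical: when $G$ is bipartite with parts $X,Y$, orienting an edge from $X$ to $Y$ is equivalent to selecting it into a half graph (one checks the in-degree-equals-out-degree condition becomes exactly the half-degree condition), so $\e(G)=h(G)$. For the ``only if'' direction I would trace back through the induction: equality $\e(G)=h(G)$ forces, at every step, equality in $\e(A)\ge h(A)$ for each even subgraph $A$ that actually appears with positive coefficient on both sides. In particular, taking $A$ to range over even subgraphs that contain an odd cycle of $G$ — if $G$ is non-bipartite it has an odd cycle $C$, and $C$ itself (or together with isolated vertices) is an even subgraph with an odd number of edges, so $h(C)=0$ while $\e(C)=2>0$; the term $A=E(C)$ forces a strict inequality somewhere in the sum. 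Making this precise — showing that a non-bipartite $G$ always produces at least one subset $A$ in the (common) support where $\e(A)>h(A)$, hence strict inequality propagates up — is the main obstacle, because one has to argue that this strictness is not accidentally cancelled. The cleanest route is probably: $h(A)=0$ whenever some component of $(V,A)$ has an odd number of edges, whereas $\e(A)>0$ whenever $(V,A)$ is merely Eulerian; an even subgraph with an odd edge-count in some component exists iff $G$ is non-bipartite (take a shortest odd cycle), so for non-bipartite $G$ at least one term of the $\e$-sum strictly exceeds the corresponding $h$-term while all others weakly exceed it by induction, forcing $\e(G)^2-2\e(G) > h(G)^2-2h(G)$ and hence $\e(G)>h(G)$. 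I would double-check the base cases and the bookkeeping of which $A$ lie in which support, as that is where a sign or support error would most plausibly creep in.
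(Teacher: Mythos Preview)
Your approach is essentially identical to the paper's: induction on $|E|$ using the identity from Theorem~\ref{recursion}, isolating the terms $A=\emptyset$ and $A=E$ to get $\e(G)^2-2\e(G)\ge h(G)^2-2h(G)$, and for the strict inequality exhibiting an odd cycle $A$ with $\e(A)=2>0=h(A)$ when $G$ is non-bipartite. The paper phrases the final deduction as ``$x^2-2x$ is increasing for $x\ge 1$'' rather than factoring, but this is cosmetic.

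There is, however, one genuine slip in your case analysis. You assert ``since $\e(G),h(G)\ge 1$'', but $h(G)\ge 1$ is false in general: an odd cycle has $h=0$, as does any Eulerian graph with a component containing an odd number of edges. Consequently your ``degenerate case $\e(G)+h(G)=2$, i.e.\ $\e(G)=h(G)=1$'' is wrong --- the triangle has $\e=2$, $h=0$, so $\e+h=2$ without $\e=h=1$, and then the factored inequality $(\e-h)(\e+h-2)\ge 0$ reads $0\ge 0$ and gives no information. The paper avoids this by treating single cycles (plus isolated vertices) as explicit base cases before invoking the induction. The cleanest patch to your version is simply to observe at the outset that if $h(G)=0$ the inequality $\e(G)\ge h(G)$ is trivial, so one may assume $h(G)\ge 1$; then $\e(G)\ge 1$ and $h(G)\ge 1$ are both available, the factor $\e(G)+h(G)-2$ is nonnegative, and the only remaining boundary case is $\e(G)=h(G)=1$, which forces $G$ to be edgeless (any Eulerian graph with at least one edge has $\e\ge 2$, by reversing an Euler tour). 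Your worry about ``strictness being accidentally cancelled'' in the non-bipartite case is unfounded: all terms satisfy $\e(A)\e(E\setminus A)\ge h(A)h(E\setminus A)\ge 0$ by induction, and the odd-cycle term is strictly positive on the $\e$-side and zero on the $h$-side, so the strict inequality in the sums is immediate.
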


Note that for a bipartite graph $G=(A,B,E)$ it is trivial that $\e(G)=h(G)$ since there is a natural correspondence between subgraphs and orientations: if an edge is oriented towards $B$, then put it into the subgraph, and if it is oriented towards $A$, then do not include it into the subgraph. This gives also a bijection between Eulerian orientations and half graphs.
\medskip 

As we will see it is natural to consider the number of Eulerian orientations and half graphs of $2$-covers of an Eulerian  graph $G$. 

\begin{Def} A $k$-cover (or $k$-lift) $H$ of a graph $G$ is defined as follows. The vertex set of  $H$ is $V(H)=V(G)\times \{0,1,\dots, k-1\}$, and if $(u,v)\in E(G)$,  then we choose a perfect matching between the vertices $(u,i)$ and $(v,j)$ for $0\leq i,j\leq k-1$. If $(u,v)\notin E(G)$, then there are no edges between $(u,i)$ and $(v,j)$ for $0\leq i,j\leq k-1$. 
\end{Def}

\begin{figure}[h!]
\begin{center}
\scalebox{.75}{\includegraphics{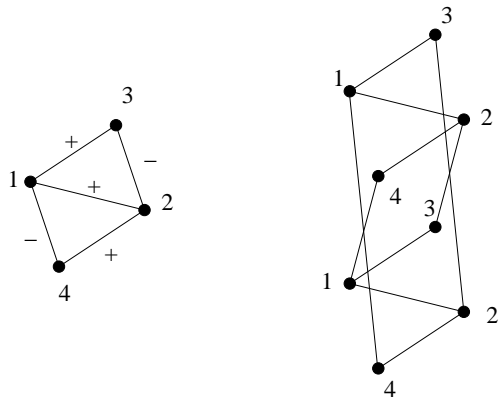}} 
\caption{A $2$-lift.} 
\end{center}
\end{figure}

When $k=2$ one can encode the $2$-lift $H$ by putting signs on the edges of the graph $G$: the $+$ sign means that we use the matching $((u,0),(v,0)),((u,1),(v,1))$ at the edge $(u,v)$,  
the $-$ sign means that we use the matching  $((u,0),(v,1)),((u,1),(v,0))$ at the edge $(u,v)$. For instance, if we put $+$ signs to every edge, then we simply get $G\cup G$ as $H$, and if we put $-$ signs everywhere, then the obtained $2$-cover $H$ is simply $G\times K_2$. (In general, the product graph $G\times H$ is defined as follows: $V(G\times H)=V(G)\times V(H)$ and the vertices $(u_1,v_1)$ and $(u_2,v_2)$ are adjacent if and only if $(u_1,u_2)\in E(G)$ and $(v_1,v_2)\in E(H)$.) Note that the graph $G\times K_2$ is bipartite, it is also called the bipartite double cover of $G$. Observe that if $G$ is bipartite, then $G\cup G=G\times K_2$, but other $2$-covers might differ from $G\cup G$.
\medskip

Graph cover techniques played important roles in the resolution of many open problems. Marcus, Spielmann and Srivastava \cite{marcus2013interlacing} used graph covers to construct Ramanujan graphs. The idea was suggested by Bilu and Linial \cite{bilu2006lifts}. Zhao \cite{zhao2010number} used the bipartite double cover to prove a conjecture of Alon \cite{alon1991independent} and Kahn \cite{kahn2001entropy} on the number of independent sets. Later he developed his ideas in the paper \cite{zhao2011bipartite}. Ruozzi \cite{ruozzi2012bethe} proved a conjecture of Sudderth, Wainwright, and Willsky \cite{willsky2008loop} on the Bethe approximation of an attractive graphical model by building on an observation due to Vontobel \cite{vontobel2013counting} connecting graph covers with Bethe approximation. Csikv\'ari \cite{csikvari2017lower} combined graph covers with graph limit theory to prove the so-called Lower Matching Conjecture of Friedland, Krop and Markstr\"om \cite{friedland2008number}. The properties of random lifts are also widely studied, see for instance the papers \cite{agarwal2019expansion} and \cite{greenhill2010number}.

\begin{Th} \label{bipartite cover} Let $G$ be an Eulerian graph with edge set $E$. Then
$$\e(G\times K_2)=h(G\times K_2)=\sum_{A\subseteq E}\e(A)h(E\setminus A).$$
\end{Th}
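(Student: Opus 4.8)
The plan is to exhibit an explicit bijection between the half graphs of $G\times K_2$ and the pairs consisting of an Eulerian orientation of an edge set $A\subseteq E$ together with a half graph of $(V,E\setminus A)$; the claimed identity then reads off at once. The equality $\e(G\times K_2)=h(G\times K_2)$ needs no work: $G\times K_2$ is bipartite, with parts $V(G)\times\{0\}$ and $V(G)\times\{1\}$, and it is Eulerian since $d_{G\times K_2}((v,i))=d_G(v)$; hence the trivial correspondence between orientations and subgraphs of a bipartite graph noted after Theorem~\ref{inequality} applies.

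For the identity, fix notation: over each edge $e=(u,v)$ of $G$ lie the two edges $e'=\{(u,0),(v,1)\}$ and $e''=\{(u,1),(v,0)\}$ of $G\times K_2$. Given a half graph $F$ of $G\times K_2$, I would classify each $e\in E$ according to $F\cap\{e',e''\}$: it is either empty, all of $\{e',e''\}$, or a single edge. Let $A$ be the set of edges of the last type; on $A$ record an orientation by declaring $(u,v)$ oriented $u\to v$ precisely when $\{(u,0),(v,1)\}\in F$ (well defined and antisymmetric, assuming $u\neq v$). Let $T\subseteq E\setminus A$ be the set of edges $e$ with $\{e',e''\}\subseteq F$.

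The heart of the argument is the degree count. Each fiber edge $\{(u,0),(v,1)\}$ contributes to $d_F$ only at the vertices $(u,0)$ and $(v,1)$, and $\{(u,1),(v,0)\}$ only at $(u,1)$ and $(v,0)$; running through the four cases for the fibers at a vertex $v$ one obtains
$$d_F((v,0))=d_A^+(v)+d_T(v),\qquad d_F((v,1))=d_A^-(v)+d_T(v),$$
where $d_A^{\pm}$ denote the out- and in-degrees in the recorded orientation of $A$. Imposing $d_F((v,i))=d_G(v)/2$ for every $v$ and $i\in\{0,1\}$ forces first $d_A^+(v)=d_A^-(v)$, so that $(V,A)$ is Eulerian and our orientation is one of its Eulerian orientations, and then, since $d_A^{\pm}(v)=d_A(v)/2$, also $d_T(v)=(d_G(v)-d_A(v))/2=d_{E\setminus A}(v)/2$, i.e.\ $T$ is a half graph of $(V,E\setminus A)$. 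Conversely, an edge set $A$ together with a chosen Eulerian orientation of $(V,A)$ and a half graph $T$ of $(V,E\setminus A)$ reconstruct a unique $F$ --- put $\{(u,0),(v,1)\}$ into $F$ for each $A$-edge oriented $u\to v$, put both fiber edges for each $e\in T$, and neither fiber edge otherwise --- and the same computation shows this $F$ is a half graph. Summing over $A$ (the terms with $(V,A)$ not Eulerian vanish, since then $\e(A)=0$) gives $h(G\times K_2)=\sum_{A\subseteq E}\e(A)h(E\setminus A)$.

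The only delicate point is the bookkeeping behind the two displayed degree identities: it is easy to confuse in- with out-degree, or the two copies of a vertex, so I would record the contributions of the four cases ($F$ meeting a fiber in $\emptyset$, $\{e'\}$, $\{e''\}$, or $\{e',e''\}$) to each of $(u,0),(u,1),(v,0),(v,1)$ in a small table and read the identities off from it. Everything else is purely formal.
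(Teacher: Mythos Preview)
Your proof is correct and follows essentially the same approach as the paper's: classify each edge of $G$ by how many of its two fiber edges lie in the half graph, orient the ``exactly one'' edges according to which fiber is present, and read the half-graph condition on $G\times K_2$ as the conjunction of an Eulerian-orientation condition on $A$ and a half-graph condition on $E\setminus A$. The only cosmetic difference is that the paper records the complementary half graph (the edges whose fiber is empty) rather than your $T$, and your degree bookkeeping is spelled out more explicitly than in the paper.
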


\noindent Combining Theorem~\ref{bipartite cover} with Theorem~\ref{inequality} we get that
$$\e(G\cup G)\geq \e(G\times K_2)=h(G\times K_2)\geq h(G\cup G)$$
since $\e(A)\geq h(A)$ and $\e(E\setminus A)\geq h(E\setminus A)$, and the equality holds true since $G\times K_2$ is a bipartite graph. This gives a slight refinement of Theorem~\ref{inequality} that already gave $\e(G\cup G)\geq h(G\cup G)$: we can see that we can put $\e(G\times K_2)$ between them.
\medskip

These inequalities can be generalized as follows. Let $H$ be an arbitrary $2$-cover of an Eulerian graph $G$. Then 
$$\e(G\cup G)\geq \e(H)\ \ \ \mbox{and}\ \ \ h(G\times K_2)\geq h(H).$$
In fact, even more general statement is true. To spell out this generalization we need the concepts $\e_{\underline{r}}(G)$ and $h_{\underline{r}}(G)$.

\begin{Def} Let $\underline{r}^G=(r_v)_{v\in V(G)}\in \mathbb{Z}^{V(G)}$. Let $\e_{\underline{r}}(G)$ denote the number of orientations of $G$ with in-degree $r_v$ at vertex $v$. We will call such an orientation an $\underline{r}$-orientation. 

Similarly, let $h_{\underline{r}}(G)$ be the number of subgraphs $F$ of $G$ with degree $d_F(v)=r_v$ for each vertex $v$. We will call such a subgraph an $\underline{r}$-factor.
\end{Def}

\noindent Clearly, if $r_v=d_G(v)/2$ for each vertex $v$, then $\e_{\underline{r}}(G)=\e(G)$ and $h_{\underline{r}}(G)=h(G)$. Other notable case is when $r_v=r$ for all $v$, then $h_{\underline{r}}(G)$ counts the number of $r$-factors.

\begin{Def} Let $\underline{r}^G=(r_v)_{v\in V(G)}\in \mathbb{Z}^{V(G)}$ and let $H$ be a $k$-cover of $G$. We say that $\underline{r}^H=(r_v)_{v\in V(H)}\in \mathbb{Z}^{V(H)}$ is induced by $\underline{r}^G$ if $r_{u'}=r_u$ for all lifts $u'\in V(H)$ of $u\in V(G)$. 
\end{Def}

\noindent In the following statements we can even drop the condition of $G$ being Eulerian.

\begin{Th} \label{2-cover inequality1} Let $\underline{r}^G=(r_v)_{v\in V(G)}\in \mathbb{Z}^{V(G)}$. Let $H$ be an arbitrary $2$-cover of the graph $G$. Let us denote by $\underline{r}$ the induced vector of $\underline{r}^G$ on $H$ and $G\cup G$. Then 
$$\e_{\underline{r}}(G\cup G)\geq \e_{\underline{r}}(H).$$
In other words, for any vector $\underline{r}\in \mathbb{Z}^{V(G)}$, the $2$-cover $H$ that maximizes $\e_{\underline{r}}(H)$
is $G\cup G$.
\end{Th}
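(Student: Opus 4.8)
The plan is to encode a $2$-cover $H$ of $G$ by a sign function $\sigma\colon E(G)\to\{+,-\}$, as in the paper, so that $G\cup G$ is the case $\sigma\equiv +$; write $H=H_\sigma$. I want to reduce the inequality to a single-graph statement and then close it with the same squaring-plus-induction argument the paper uses to pass from Theorem~\ref{recursion} to Theorem~\ref{inequality}.

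First I would record, edge by edge, how an $\underline{r}$-orientation of $H_\sigma$ ``mixes the two sheets''. For $e=(u,v)\in E(G)$, orienting the two lift edges of $e$ produces, as induced in-degree increment on $V(H_\sigma)$, one of four patterns: $\{(u,0),(u,1)\}$, $\{(v,0),(v,1)\}$, and — depending only on $\sigma(e)$ — either $\{(u,0),(v,1)\},\{(u,1),(v,0)\}$ if $\sigma(e)=+$, or $\{(u,0),(v,0)\},\{(u,1),(v,1)\}$ if $\sigma(e)=-$. Let $M\subseteq E(G)$ collect the edges receiving one of the two ``mixing'' patterns. The non-mixing edges contribute equally to the two sheets above their chosen endpoint, so they form an orientation of $(V,E\setminus M)$; summing the $M$-increments over the two sheets forces $d_M(w)$ even and the two sheet-increments at $w$ to both equal $\tfrac12 d_M(w)$. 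Writing, for an Eulerian graph $M$ with an edge-signing, $N_\sigma(M)$ for the number of pairs $(\vec{O},F)$ with $\vec{O}$ an orientation of the positive edges of $M$, $F$ a subset of its negative edges, and $d^{-}_{\vec{O}}(w)+d_F(w)=\tfrac12 d_M(w)$ (in-degree of $w$ in $\vec{O}$, plus $d_F(w)$) for every vertex $w$, one gets the identity
\[
\e_{\underline{r}}(H_\sigma)=\sum_{M\subseteq E,\ M\ \mathrm{Eulerian}}\e_{\underline{r}-\underline{d}_M/2}(E\setminus M)\cdot N_\sigma(M),
\]
where $\underline{d}_M=(d_M(w))_w$ and the term vanishes if $r_w<d_M(w)/2$ for some $w$. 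For $\sigma\equiv+$ the condition on $(\vec{O},F)$ says precisely that $\vec{O}$ is an Eulerian orientation of $M$, so $N_{+}(M)=\e(M)$ and the identity becomes $\e_{\underline{r}}(G\cup G)=\sum_{M}\e_{\underline{r}-\underline{d}_M/2}(E\setminus M)\,\e(M)$ — an $\underline{r}$-graded refinement of Theorem~\ref{recursion}, equal to $\e_{\underline{r}}(G)^2$.

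Subtracting the two identities gives
\[
\e_{\underline{r}}(G\cup G)-\e_{\underline{r}}(H_\sigma)=\sum_{M\ \mathrm{Eulerian}}\e_{\underline{r}-\underline{d}_M/2}(E\setminus M)\bigl(\e(M)-N_\sigma(M)\bigr),
\]
and since all coefficients are nonnegative integers it suffices to prove $N_\sigma(M)\le\e(M)$ for every Eulerian $M$ and every signing. To this end I would first establish for $N_\sigma$ the exact analogue of Theorem~\ref{recursion},
\[
N_\sigma(M)^2=\sum_{D\subseteq E(M)}N_\sigma(D)\,N_\sigma(M\setminus D)
\]
(with inherited signings): given two admissible pairs for $M$, let $D^{+}$ be the set of positive edges on which their orientations disagree, $D^{-}$ the symmetric difference of their $F$-parts, $D=D^{+}\cup D^{-}$; a degree count shows the ``disagreement halves'' form an admissible pair for $D$ and the ``agreement halves'' one for $M\setminus D$, and reversing the orientation on $D^{+}$ and complementing $F$ inside $D^{-}$ recovers the other input, yielding the bijection. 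Then strong induction on $|E(M)|$ finishes exactly as in the paper's proof of Theorem~\ref{inequality}: peeling off the terms $D=\emptyset$ and $D=E(M)$ gives $N_\sigma(M)^2-2N_\sigma(M)\le\e(M)^2-2\e(M)$ by the induction hypothesis and nonnegativity, hence $\bigl(N_\sigma(M)-\e(M)\bigr)\bigl(N_\sigma(M)+\e(M)-2\bigr)\le0$, and since $\e(M)\ge1$ and $N_\sigma(M)\ge0$ this forces $N_\sigma(M)\le\e(M)$.

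The conceptual heart — and the step I expect to be the main obstacle — is the first one: recognizing that once the mixing set $M$ is fixed the only surviving information about the cover is the signing of $M$, so that the whole problem collapses to the single-graph inequality $N_\sigma(M)\le\e(M)$. After that the two bijections (the decomposition identity and the squaring identity for $N_\sigma$) carry the load; they are elementary but require careful degree bookkeeping, and the squaring identity for $N_\sigma$ is the one genuinely new ingredient beyond Theorems~\ref{recursion} and~\ref{inequality}.
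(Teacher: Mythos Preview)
Your argument is correct, but it is a genuinely different proof from the one the paper gives for this theorem. The paper's proof is very short once the lemmas are in place: it encodes $\e_{\underline r}(G)$ via perfect matchings of Schrijver's bipartite graph $G^*$ (Lemma~\ref{Euler-matchings}), observes that $H^*$ is a $2$-cover of the bipartite graph $G^*$, and then invokes Lemma~\ref{2-cover matchings} ($\prm(H^*)\le\prm(G^*)^2$) to conclude. No recursion, no induction.

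Your route stays purely combinatorial and is essentially the machinery of Section~\ref{general 2-cover} rediscovered and applied here. Your $N_\sigma(M)$ is exactly the paper's $g(M)$ for the decoration that puts $o$ on the $+$-edges and $s$ on the $-$-edges of $M$; the key inequality $N_\sigma(M)\le\e(M)$ you prove is Theorem~\ref{mixed2}, and your squaring identity for $N_\sigma$ is the special case of Theorem~\ref{mixed 2-lift} for the trivial cover $M\cup M$. Your decomposition $\e_{\underline r}(H_\sigma)=\sum_{M}\e_{\underline r-\underline d_M/2}(E\setminus M)\,N_\sigma(M)$ is an $\underline r$-graded refinement of Theorem~\ref{mixed 2-lift} (with all $o$'s on $H$); it does not appear in the paper in this form and is a pleasant byproduct. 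What the paper's approach buys is brevity and a reduction to a known matching inequality; what yours buys is self-containment (no perfect-matching detour), and it shows directly that Theorem~\ref{2-cover inequality1} is already a consequence of the factorientation framework of Theorems~\ref{mixed 2-lift} and~\ref{mixed2} without passing through Lemmas~\ref{Euler-matchings} and~\ref{2-cover matchings}.
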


\begin{Th} \label{2-cover inequality2} Let $\underline{r}^G=(r_v)_{v\in V(G)}\in \mathbb{Z}^{V(G)}$. Let $H$ be an arbitrary $2$-cover of a graph $G$. Let us denote by $\underline{r}$ the induced vector of $\underline{r}^G$ on $H$ and $G\times K_2$. Then 
$$h_{\underline{r}}(G\times K_2)\geq h_{\underline{r}}(H).$$
In other words, for any vector $\underline{r}\in \mathbb{Z}^{V(G)}$, the $2$-cover $H$ that maximizes $h_{\underline{r}}(H)$
is $G\times K_2$.
\end{Th}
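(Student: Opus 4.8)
The plan is to encode the cover by a signing and then pass to a Fourier/integral representation of $h_{\underline r}$ in which the dependence on the signing becomes completely transparent. A direct edge-by-edge comparison cannot work, since flipping one sign is not monotone: already for $G=K_3$ with $r_v\equiv 1$, the signings with $0,1,2,3$ negative edges give $h_{\underline r}=0,2,0,2$, so the argument must be global. \textbf{Step 1 (reduction).} Every $2$-cover of $G$ is $H_\sigma$ for some $\sigma\colon E(G)\to\{+1,-1\}$, with $G\cup G=H_{+1}$ and $G\times K_2=H_{-1}$, and the induced vector assigns $r_v$ to both lifts $(v,0),(v,1)$ of $v$. So it suffices to prove $h_{\underline r}(H_\sigma)\le h_{\underline r}(H_{-1})$ for all $\sigma$.

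\textbf{Step 2 (isolating the signs).} Start from $h_{\underline s}(\Gamma)=\bigl[\prod_w z_w^{s_w}\bigr]\prod_{ww'\in E(\Gamma)}(1+z_wz_{w'})$, i.e. $h_{\underline s}(\Gamma)=(2\pi)^{-|V(\Gamma)|}\int\prod_w e^{-is_w\phi_w}\prod_{ww'}\bigl(1+e^{i(\phi_w+\phi_{w'})}\bigr)\,d\phi$ with $z_w=e^{i\phi_w}$. For $\Gamma=H_\sigma$ give $(v,0),(v,1)$ the angles $\alpha_v,\beta_v$ and substitute $\alpha_v=\theta_v+\psi_v$, $\beta_v=\theta_v-\psi_v$ (a measure-preserving change of variables on the torus). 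A one-line computation collapses the two lift-edge factors of $e=uv$ to $2e^{i(\theta_u+\theta_v)}\bigl(\cos(\theta_u+\theta_v)+\cos(\psi_u+\sigma_e\psi_v)\bigr)$, so the entire $\sigma$-dependence sits in the single summand $\cos(\psi_u+\sigma_e\psi_v)=\cos\psi_u\cos\psi_v-\sigma_e\sin\psi_u\sin\psi_v$. Collecting the $e^{i(\theta_u+\theta_v)}$ into $\prod_v e^{id_G(v)\theta_v}$ (which merges with $\prod_v e^{-2ir_v\theta_v}$) yields
\[
h_{\underline r}(H_\sigma)=\frac{2^{|E|}}{(2\pi)^{2|V|}}\int\Bigl(\prod_v e^{i(d_G(v)-2r_v)\theta_v}\Bigr)\prod_{uv\in E}\bigl(A_{uv}-\sigma_{uv}\sin\psi_u\sin\psi_v\bigr)\,d\theta\,d\psi,
\]
where $A_{uv}=\cos(\theta_u+\theta_v)+\cos\psi_u\cos\psi_v$ is independent of $\sigma$.

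\textbf{Step 3 (positive expansion and conclusion).} Expanding the product over subsets $W\subseteq E$ gives
\[
h_{\underline r}(H_\sigma)=\frac{2^{|E|}}{(2\pi)^{2|V|}}\sum_{W\subseteq E}\Bigl(\prod_{e\in W}(-\sigma_e)\Bigr)J(W),\qquad
J(W)=\int\Bigl(\prod_v e^{i(d_G(v)-2r_v)\theta_v}(\sin\psi_v)^{d_W(v)}\Bigr)\prod_{uv\notin W}A_{uv}\,d\theta\,d\psi .
\]
The key claim is $J(W)\ge 0$ for every $W$: writing $A_{uv}=\tfrac12e^{i(\theta_u+\theta_v)}+\tfrac12e^{-i(\theta_u+\theta_v)}+\cos\psi_u\cos\psi_v$ and expanding $\prod_{uv\notin W}A_{uv}$ produces a sum of monomials with positive coefficients $2^{-k}\prod_v e^{ic_v\theta_v}(\cos\psi_v)^{a_v}$; the $\theta$-integral annihilates all but the resonant terms, and the $\psi$-integral leaves $\prod_v\int_0^{2\pi}(\cos\psi_v)^{a_v}(\sin\psi_v)^{d_W(v)}\,d\psi_v\ge 0$. (In fact only Eulerian $W$ survive, because $\int_0^{2\pi}(\sin\psi)^{\mathrm{odd}}(\cos\psi)^{\bullet}\,d\psi=0$, but we do not need this.) Since $J(W)\ge 0$ and $\prod_{e\in W}(-\sigma_e)\in\{\pm1\}$, we get $h_{\underline r}(H_\sigma)\le \frac{2^{|E|}}{(2\pi)^{2|V|}}\sum_{W}J(W)$, and the right-hand side equals $h_{\underline r}(H_\tau)$ precisely when $\prod_{e\in W}(-\tau_e)=1$ for all $W$, i.e. $\tau\equiv-1$, i.e. $H_\tau=G\times K_2$. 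Hence $h_{\underline r}(G\times K_2)\ge h_{\underline r}(H_\sigma)$.

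\textbf{Main obstacle and remark.} The genuine difficulty is finding the substitution $\alpha_v=\theta_v+\psi_v$, $\beta_v=\theta_v-\psi_v$ of Step 2 that decouples the sign-dependence into one trigonometric term, so that the positivity of Step 3 applies; note that the analogous manipulation at the level of polynomial coefficients, using $(1+x_ux_v)(1+y_uy_v)-(1+x_uy_v)(1+x_vy_u)=(x_u-y_u)(x_v-y_v)$, does \emph{not} give a term-by-term positive expansion (the individual coefficients are sign-indefinite), which is exactly why passing to the Fourier side is necessary. Everything else is bookkeeping. The same scheme applied to $\e_{\underline s}(\Gamma)=\bigl[\prod_w z_w^{s_w}\bigr]\prod_{ww'}(z_w+z_{w'})$ instead of $\prod_{ww'}(1+z_wz_{w'})$ turns the relevant summand into $\cos(\psi_u-\sigma_e\psi_v)=\cos\psi_u\cos\psi_v+\sigma_e\sin\psi_u\sin\psi_v$, flipping the winning sign pattern from $\tau\equiv-1$ to $\tau\equiv+1$, and thereby proves Theorem~\ref{2-cover inequality1}.
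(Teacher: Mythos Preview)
Your argument is correct and genuinely different from the paper's. The paper proceeds by reduction to perfect matchings: it invokes Tutte's auxiliary graph $G^{**}$ (Lemma~3.2 here) so that $h_{\underline r}(G)=\prm(G^{**})/\prod_v r_v!$, observes that $(H_\sigma)^{**}$ is itself a $2$-cover of $G^{**}$ with $(G\times K_2)^{**}=G^{**}\times K_2$, and then applies Csikv\'ari's inequality $\prm(H')\le \prm(G'\times K_2)$ for $2$-covers (Lemma~3.3), whose proof is a short projection/lifting count on cycles. Your route bypasses both auxiliary constructions: you express $h_{\underline r}$ as a Fourier integral, use the substitution $\alpha_v=\theta_v+\psi_v,\ \beta_v=\theta_v-\psi_v$ to force the entire $\sigma$-dependence into the single factor $-\sigma_e\sin\psi_u\sin\psi_v$, and conclude by the termwise nonnegativity of the integrals $J(W)$.

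What each approach buys: the paper's proof is modular---Theorems~1.6 and~1.7 both become one-line corollaries of a single combinatorial lemma about perfect matchings---and it makes the connection to the matching literature explicit. Your proof is self-contained (no Tutte gadget, no cited matching inequality) and yields a transparent explanation of \emph{why} the all-minus signing wins: it is the unique $\sigma$ for which every coefficient $\prod_{e\in W}(-\sigma_e)$ equals $+1$. Your closing remark that replacing $(1+z_wz_{w'})$ by $(z_w+z_{w'})$ flips the relevant cosine from $\cos(\psi_u+\sigma_e\psi_v)$ to $\cos(\psi_u-\sigma_e\psi_v)$ and hence flips the extremal signing to $\sigma\equiv+1$ is also correct, so the same machinery recovers Theorem~1.6 in one stroke. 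It is worth noting that the only $W$ with $J(W)\neq 0$ are Eulerian (as you parenthetically observe), which is the analytic shadow of the cycle-lifting argument used in Lemma~3.3; so the two proofs are cousins at a distance, but yours avoids the detour through perfect matchings.
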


\noindent Note that Theorems~\ref{2-cover inequality1} and \ref{2-cover inequality2} provide yet another proof of Theorem~\ref{inequality} by
$$\e(G)^2=\e(G\cup G)\geq \e(G\times K_2)=h(G\times K_2)\geq h(G\cup G)=h(G)^2,$$
where in the equality $\e(G\times K_2)=h(G\times K_2)$ we use only the fact that $G\times K_2$ is a bipartite graph, so we only use the trivial part of Theorem~\ref{inequality}.
\medskip

Next we generalize the concept of Eulerian orientations and half graphs. To every edge of $G$ let us
assign either $o$ or $s$, that is, orientation or subgraph. Then for all edge we have two choices: if we assigned $o$ to the edge, then we need to orient it, so when we consider the in-degree, we choose one of the end points and add $1$ to it, and add $0$ to the other endpoint.  If we assigned $s$ for the edge, then we need to decide whether we put this edge into subgraph or not, so we either add $1$ to the degrees of both endpoints, or add $0$ to the degrees of both endpoints. We will call such a configuration a factorientation. We will call the contribution of the edges to the vertex $v$ the mixed degree of $v$, that is, it is the sum of the in-degree from oriented edges and the degree coming from the subgraph. After we choose $o$ or $s$ for every edge, we say that a factorientation is balanced, if for every vertex $v$ the mixed degree is $d_G(v)/2$. Let $g(G)$ be the number of balanced factorientations.
We can see that this is a generalization of both Eulerian orientations and half graphs,
because if we assign $o$ to each edge, then $g(G)=\e(G)$ and if we assign $s$ to each edge, then
$g(G) = h(G)$. 

\begin{Th} \label{mixed 2-lift} Let $G$ be a graph with each edge equipped with one of the $4$ decorations $(+,o),(+,s),(-,o),(-,s)$.  Let $H$ be a $2$-cover of $G=(V,E)$ encoded by the $+$ and $-$ signs. Furthermore, we put the decorations $o$ and $s$ to the edges of $H$ consistently with the decoration of $G$, that is, 
for each edge $e\in E(G)$ the $2$-lifts of this edge get the same letter ($o$ or $s$). For an edge subset $B\subseteq E(G)$ let $\overline{B}$ denote the graph with vertex set $V(G)$ and edge set $B$, where for each edge $e\in B$ with a $-$ sign on it we swap the decorations $o$ to $s$ and vice versa. Then the number of balanced factorientations with respect to these decorations satisfies
$$g(H)=\sum_{A\subseteq E}g(A)g(\overline{E\setminus A}).$$
\end{Th}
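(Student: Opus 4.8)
The plan is to mimic the proof strategy that must underlie Theorems~\ref{recursion} and~\ref{bipartite cover}, namely to set up a bijection between balanced factorientations of the $2$-cover $H$ and pairs of balanced factorientations of certain "signed" subgraphs of $G$, indexed by a subset $A\subseteq E$. The key observation is that in a $2$-cover $H$, above each edge $e=(u,v)$ of $G$ there sit exactly two edges of $H$, and a factorientation of $H$ makes a choice at each of these two edges. I would encode this pair of choices by recording, for each $e\in E(G)$, whether the two lifted edges "agree" or "disagree" in a suitable sense; the set $A\subseteq E$ will be (roughly) the set of edges where the two lifts behave the same way. This splits the data of a factorientation of $H$ into two halves, one for each sheet of the cover, and the sign on $e$ governs how the two sheets interact.

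First I would fix notation for the two sheets: $V(H)=V(G)\times\{0,1\}$, and for an edge $e=(u,v)\in E(G)$ with a $+$ sign the lifts are $e_0=((u,0),(v,0))$ and $e_1=((u,1),(v,1))$, while with a $-$ sign they are $((u,0),(v,1))$ and $((u,1),(v,0))$. Next, for a factorientation of $H$, at each $o$-edge we have an orientation of both lifts, and at each $s$-edge we have an in/out decision (include or not) for both lifts. I would then define $A\subseteq E$ to be the set of edges where the two lifts make "the same" local choice relative to sheet $0$: for a $+$-edge this literally means the same orientation or the same in/out status on both lifts; for a $-$-edge it means the two lifts, read through the sheet-swap, make the same choice. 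The point of introducing $\overline{E\setminus A}$ — swapping $o\leftrightarrow s$ on the $-$-edges of the complement — is precisely to make this bookkeeping uniform: restricting a factorientation of $H$ to sheet $0$ gives a balanced factorientation of the graph $A$ with its inherited decorations, and restricting to sheet $1$, after undoing the sheet-swap wherever a $-$ sign forced a crossover, gives a balanced factorientation of $\overline{E\setminus A}$. Conversely, a choice of $A$ together with balanced factorientations of $A$ and of $\overline{E\setminus A}$ reconstructs a unique factorientation of $H$: on edges of $A$ copy the $A$-factorientation to both sheets (twisting sheet $1$ according to the sign), and on edges of $E\setminus A$ copy the $\overline{E\setminus A}$-factorientation with the opposite twist.

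The crux is the balance condition, and this is the step I expect to be the main obstacle: I must check that the mixed degree of $(v,0)$ in $H$ equals $d_H((v,0))/2 = d_G(v)/2$ precisely when the mixed degree of $v$ in the reconstructed factorientation of $A$ is $d_A(v)/2$ and simultaneously the mixed degree of $v$ in $\overline{E\setminus A}$ is $d_{E\setminus A}(v)/2$, and likewise for $(v,1)$. The subtlety is that an edge of $H$ lying above a $-$-edge of $G$ contributes to $(v,0)$ through a lift whose other endpoint is in sheet $1$, so the contribution of the two lifts of a single edge of $G$ is split across the two sheets in a way that depends on the sign; and the $o\leftrightarrow s$ swap in $\overline{E\setminus A}$ changes how an edge contributes (an $o$-edge contributes $1$ to exactly one endpoint, an $s$-edge contributes $0$ or $1$ to both). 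I would verify the identity $d_H((v,0)) = d_H((v,1)) = d_G(v)$ first (immediate from the cover structure), then do a careful case analysis on the four decorations and the two signs showing that, edge by edge, the contribution to the mixed degree of $(v,0)$ in $H$ plus the contribution to $(v,1)$ equals the contribution to the mixed degree of $v$ in $A$ plus the contribution to the mixed degree of $v$ in $\overline{E\setminus A}$. Summing over all edges incident to $v$ and dividing by $2$ then yields the claimed equivalence of balance conditions, and the bijection gives the formula $g(H)=\sum_{A\subseteq E} g(A)\, g(\overline{E\setminus A})$. Finally, I would note that Theorem~\ref{recursion} is the special case where every edge is decorated $o$ (resp.\ $s$) and every sign is $+$ (so $H=G\cup G$ and $\overline{E\setminus A}=E\setminus A$), and Theorem~\ref{bipartite cover} is the case where every sign is $-$ and all decorations agree, so the present theorem is the common generalization and its proof specializes to proofs of both.
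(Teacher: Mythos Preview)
Your overall strategy matches the paper's: define $A$ as the set of edges of $G$ whose two lifts in $H$ make the same projected choice, and argue that a balanced factorientation of $H$ corresponds bijectively to a pair consisting of a balanced factorientation on $A$ and one on $\overline{E\setminus A}$. The paper, too, constructs the second factorientation by reading off one sheet (it uses sheet~$0$; your choice of sheet~$1$ is the symmetric one), and makes the $o\leftrightarrow s$ swap on minus-signed edges of $B$ explicit via a case analysis along the lines you sketch. Your phrasing ``restricting to sheet $0$/sheet $1$'' is imprecise for $-$-edges, whose lifts straddle both sheets, but this can be repaired.

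The genuine gap is in your balance verification. The edge-by-edge identity you assert --- that the contributions of an edge's lifts to the mixed degrees of $(v,0)$ and $(v,1)$ sum to the contributions of that edge to $v$ in $A$ and in $\overline{E\setminus A}$ --- is false: take a $(+,o)$-edge in $A$ with both lifts oriented towards $v$; the $H$-side sum is $1+1=2$ while the $G$-side sum is $1+0=1$. More fundamentally, even a corrected additive identity would only constrain the \emph{sum} of the two mixed degrees on each side, so ``summing and dividing by $2$'' cannot by itself yield that the $A$-piece and the $\overline{E\setminus A}$-piece are \emph{separately} balanced. The paper closes this gap with a different argument: after projection, each $A$-edge's two agreeing lifts together contribute either $0$ or $2$ to the total mixed degree at $v$ (which equals $d_G(v)$), whereas each $B$-edge's two disagreeing lifts always contribute exactly $1$; this forces the $A$-contributions to sum to $d_A(v)$, and since each is $0$ or $2$ the restriction to $A$ is balanced. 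Only once that is established does the balance of the $\overline{E\setminus A}$-piece follow, by matching contributions to the mixed degree at $(v,0)$.
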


Clearly, Theorem~\ref{mixed 2-lift} generalizes both Theorem~\ref{recursion} and \ref{bipartite cover}. We can also generalize Theorem~\ref{inequality} as follows.

\begin{Th} \label{mixed2} Let $G$ be an Eulerian graph with decorations $o$ and $s$ on the edges. Let $g(G)$ be the number of corresponding balanced factorientations. Then $g(G)\leq \e(G)$.
\end{Th}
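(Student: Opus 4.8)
The plan is to mimic the proof of Theorem~\ref{inequality} that uses the identities of Theorem~\ref{recursion}, now in the more general guise of Theorem~\ref{mixed 2-lift}. First I would fix the decorations on $G$ and let $H$ be the $2$-cover obtained by putting the $+$ sign on every edge, so that $H = G \cup G$ as a graph, but carrying the mixed $(\pm,o/s)$-decorations consistently lifted. Since every sign is $+$, swapping decorations on a $-$-edge is vacuous, hence $\overline{E\setminus A} = E\setminus A$ for every $A$, and Theorem~\ref{mixed 2-lift} collapses to
$$g(G\cup G) = \sum_{A\subseteq E} g(A)\, g(E\setminus A).$$
On the other hand, a balanced factorientation of $G \cup G$ is just an independent choice of one on each of the two disjoint copies of $G$, so $g(G\cup G) = g(G)^2$. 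This gives the "squaring" identity $g(G)^2 = \sum_{A\subseteq E} g(A) g(E\setminus A)$, exactly parallel to Theorem~\ref{recursion}.

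Next I would apply the same argument to the bipartite double cover: let $H = G \times K_2$, i.e. put the $-$ sign on every edge, again lifting the $o/s$-decorations consistently. Now Theorem~\ref{mixed 2-lift} reads $g(G\times K_2) = \sum_{A\subseteq E} g(A)\, g(\overline{E\setminus A})$, where $\overline{E\setminus A}$ is the graph on $E\setminus A$ with \emph{all} decorations flipped ($o\leftrightarrow s$). The key observation is then a \emph{monotonicity lemma}: for any graph with mixed $o/s$-decorations, $g \le \e$ where $\e$ denotes the number of $\underline{r}$-orientations of the \emph{fully-$o$ version} with the same target mixed degrees $d_G(v)/2$ — but more usefully, I want to bound $g(G\times K_2)$ directly. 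Since $G\times K_2$ is bipartite regardless of the $o/s$-labels, I would argue that on a bipartite graph the number of balanced factorientations is independent of the $o/s$-decoration: on each edge, whichever of $o$ or $s$ is assigned, the edge contributes exactly one "unit of choice" and the bipartition $(X,Y)$ lets us translate a subgraph-choice on an $s$-edge into the orientation "towards $Y$ iff in the subgraph", and this bijection is compatible with fixing mixed degrees. Hence $g(G\times K_2) = \e(G\times K_2) = \e(G)^2$ by the trivial bijection already noted in the text after Theorem~\ref{inequality}.

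Finally I would combine: by Theorem~\ref{mixed 2-lift} applied to an \emph{arbitrary} $2$-cover (or simply by comparing the two identities above term by term), I expect $g(G)^2 = \sum_A g(A)g(E\setminus A) \le \sum_A g(A) g(\overline{E\setminus A}) = g(G\times K_2) = \e(G)^2$, which yields $g(G)\le \e(G)$. The main obstacle — and the step I would spend the most care on — is justifying the middle inequality $\sum_A g(A)g(E\setminus A) \le \sum_A g(A)g(\overline{E\setminus A})$: this is a non-trivial rearrangement, since flipping all decorations on $E\setminus A$ genuinely changes the summand. I would look for a sign/parity argument analogous to the one Borb\'enyi and Csikv\'ari use for $\e \ge h$, perhaps showing that $g(\overline{B}) \ge g(B)$ fails in general but holds "on average" once summed against $g(A)$ over complementary pairs, or — more promisingly — establishing $g(\overline B)\ge g(B)$ is the wrong target and instead proving directly that $g(H) \le \e(G\cup G) = \e(G)^2$ for every $2$-cover $H$ with consistent decorations, by the same cover-comparison technique used for Theorems~\ref{2-cover inequality1} and \ref{2-cover inequality2}. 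If the latter inequality $g(H)\le \e(G\cup G)$ holds for all $2$-covers, then taking $H = G\cup G$ with the given decorations gives $g(G)^2 = g(G\cup G) \le \e(G)^2$, finishing the proof.
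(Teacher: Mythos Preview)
Your first step is exactly right and matches the paper: applying Theorem~\ref{mixed 2-lift} with all $+$ signs gives
\[
g(G)^2=\sum_{A\subseteq E} g(A)\,g(E\setminus A).
\]
But from here you go astray. The detour through $G\times K_2$ is unnecessary, and it contains an actual error: you assert $\e(G\times K_2)=\e(G)^2$, citing the bijection after Theorem~\ref{inequality}. That bijection only shows $\e(G\times K_2)=h(G\times K_2)$ for the bipartite graph $G\times K_2$; in general $\e(G\times K_2)\le \e(G\cup G)=\e(G)^2$ with strict inequality whenever $G$ is non-bipartite (this is precisely Theorem~\ref{2-cover inequality1}). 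So the chain $g(G)^2\le g(G\times K_2)=\e(G\times K_2)=\e(G)^2$ breaks at the last equality, and you yourself flag the first inequality as unjustified.

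The missing idea is much simpler: induction on $|E|$. You already have the two parallel identities
\[
g(G)^2=\sum_{A\subseteq E} g(A)\,g(E\setminus A)
\qquad\text{and}\qquad
\e(G)^2=\sum_{A\subseteq E} \e(A)\,\e(E\setminus A),
\]
the second being Theorem~\ref{recursion}. Separate out the terms $A=\emptyset$ and $A=E$ (each contributes $g(G)$, respectively $\e(G)$), and for every other $A$ both $A$ and $E\setminus A$ have strictly fewer edges, so the inductive hypothesis gives $g(A)\le \e(A)$ and $g(E\setminus A)\le \e(E\setminus A)$. Hence $g(G)^2-2g(G)\le \e(G)^2-2\e(G)$, and monotonicity of $x^2-2x$ on $[1,\infty)$ finishes it. This is exactly the paper's argument, and it bypasses entirely the unresolved ``middle inequality'' you were worried about.
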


With the method that we use to prove Theorem~\ref{recursion} we can also prove another nice result about orientations:

\begin{Th} \label{extra} Let $G$ be an Eulerian graph. Then $\varepsilon_{\underline{r}}(G)$ is maximal if $r_v=d_G(v)/2$ for all vertex $v$.
\end{Th}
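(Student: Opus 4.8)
The plan is to push the idea behind Theorem~\ref{recursion} one step further: first rewrite $\e_{\underline r}(G)^2$ as a convolution over edge subsets, and then induct on $|E(G)|$.

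\textbf{Step 1 (a convolution identity).} I would count pairs $(\sigma_1,\sigma_2)$ of $\underline r$-orientations of $G$ — there are $\e_{\underline r}(G)^2$ of them — and record the set $D\subseteq E$ of edges on which $\sigma_1$ and $\sigma_2$ disagree. On $E\setminus D$ the orientations coincide; comparing the in-degree of a vertex $v$ under $\sigma_1$ and under $\sigma_2$, and using that both are $\underline r$-orientations, forces $\sigma_1|_D$ to have in-degree exactly $\tfrac12 d_D(v)$ at $v$, i.e.\ $\sigma_1|_D$ is an Eulerian orientation of $(V,D)$, and then $\sigma_1|_{E\setminus D}$ has in-degree $r_v-\tfrac12 d_D(v)$ at $v$. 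Conversely, a choice of $D\subseteq E$, an Eulerian orientation $\tau$ of $(V,D)$, and an orientation $\rho$ of $(V,E\setminus D)$ with in-degree vector $\underline c(D):=(r_v-\tfrac12 d_D(v))_{v\in V}$ reconstructs the pair: let $\sigma_1$ orient $D$ by $\tau$ and $E\setminus D$ by $\rho$, and let $\sigma_2$ reverse $\tau$ on $D$ and agree with $\rho$ on $E\setminus D$. This bijection gives
$$\e_{\underline r}(G)^2=\sum_{D\subseteq E}\e(D)\,\e_{\underline c(D)}(E\setminus D),$$
where $\e_{\underline s}(A)$ denotes the number of $\underline s$-orientations of $(V,A)$; this specializes to the first identity of Theorem~\ref{recursion} when $r_v=d_G(v)/2$ for every $v$.

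\textbf{Step 2 (induction on $|E(G)|$).} The case $E=\emptyset$ is trivial, and in the inductive step there is nothing to prove when $r_v=d_G(v)/2$ for all $v$, so suppose not. In the identity the term $D=\emptyset$ equals $\e_{\underline r}(E)=\e_{\underline r}(G)$, and the term $D=E$ equals $\e(G)\cdot\e_{\underline c(E)}(\emptyset)=0$ since $c(E)_v=r_v-\tfrac12 d_G(v)$ is not identically zero. For $\emptyset\neq D\subsetneq E$ the summand vanishes unless $D$ is Eulerian (else $\e(D)=0$), in which case $E\setminus D$ is an Eulerian graph with strictly fewer edges than $G$, so the inductive hypothesis gives $\e_{\underline c(D)}(E\setminus D)\le\e(E\setminus D)$. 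Summing and using Theorem~\ref{recursion} for $\sum_{D\subseteq E}\e(D)\e(E\setminus D)=\e(G)^2$, whose $D=\emptyset$ and $D=E$ terms each contribute $\e(G)$, we obtain
$$\e_{\underline r}(G)^2\le\e_{\underline r}(G)+\e(G)^2-2\e(G).$$
Writing $x=\e_{\underline r}(G)$ and $m=\e(G)$, and noting $m\ge1$ because every Eulerian graph has an Eulerian orientation (orient the cycles of a cycle decomposition), the assumption $x\ge m+1$ would give $x(x-1)\ge m(m+1)>m^2-2m$, contradicting $x^2-x\le m^2-2m$. Hence $\e_{\underline r}(G)\le\e(G)$, and since $\e(G)=\e_{\underline s}(G)$ for the vector $s_v=d_G(v)/2$, this is exactly the claim.

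\textbf{Main obstacle.} The delicate points are the bookkeeping of the two degenerate terms $D=\emptyset$ and $D=E$ in the convolution — it is the $D=E$ term that forces the proof to split according to whether $\underline r$ is already the balanced vector — and the observation that non-Eulerian $D$ contribute nothing, which is what keeps the induction inside the class of Eulerian graphs, where alone the vector $(d_G(v)/2)_{v}$ is integral. The remaining algebra is routine.
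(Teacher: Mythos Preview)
Your proof is correct and follows essentially the same approach as the paper: derive a convolution identity expressing $\e_{\underline r}(G)^2$ as $\sum_{D\subseteq E}\e(D)\,\e_{\underline c(D)}(E\setminus D)$ and then induct on $|E|$, peeling off the degenerate terms. The only cosmetic difference is that the paper pairs an $\underline r$-orientation with a $(\underline d-\underline r)$-orientation and records the \emph{agreement} set (then invokes the symmetry $\e_{\underline r}=\e_{\underline d-\underline r}$ to turn the product into a square), whereas you pair two $\underline r$-orientations and record the \emph{disagreement} set---these are equivalent via reversing one orientation, and your extra separation of the $D=E$ term yields the slightly sharper $x^2-x\le m^2-2m$ in place of the paper's $x^2-x\le m^2-m$, but the conclusion is the same.
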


\subsection{Organization of this paper and notations.} In the next section we prove Theorems~\ref{recursion}, \ref{inequality} and \ref{bipartite cover}. In Section~\ref{2-cover} we prove Theorem~\ref{2-cover inequality1} and Theorem~\ref{2-cover inequality2}. In Section~\ref{general 2-cover} we prove Theorems~\ref{mixed 2-lift} and \ref{mixed2}. In Section 5 we prove Theorem~\ref{extra}. We end the paper with an open problem.
\bigskip

\noindent \textbf{Notations.} For a graph $G$ and a vertex $v$ let $E_v$ denote the edges incident to $v$ in the graph $G$ and let $d_G(v)=|E_v|$. Furthermore, $N_G(v)$ denotes the set of neighbors of $v$. For a graph $G=(V,E)$ let $\prm(G)$ denote the number of perfect matchings. If $H$ is a $2$-cover of $G$, then $V_0$ and $V_1$ will denote the two copies of the vertex set. In particular, $G\times K_2$ is a bipartite graph with bipartite classes $V_0$ and $V_1$. For a vertex $u\in V(G)$ let $u_0$ and $u_1$ denote the two copies of $u$ in the $2$-cover $H$.

\section{Proof of Theorems~\ref{recursion}, \ref{inequality} and \ref{bipartite cover}}

In this section we prove Theorems~\ref{recursion}, \ref{inequality} and \ref{bipartite cover}.

\begin{proof}[Proof of Theorem~\ref{recursion}]
We only prove the identity $h(G)^2=\sum_{A\subseteq E}h(A)h(E\setminus A)$. The proof of the other identity is very similar. 
 Consider $S,T\subseteq E(G)$ determining two half graphs. Let $A=S\Delta T$ be the symmetric difference of the half graphs $S$ and $T$. Let $B=E\setminus A$, then $B=(S\cap T)\cup (E\setminus (S\cup T))$. Recall that $E_v$ denotes the edges incident to the vertex $v$. Since $|S\cap E_v|=|T\cap E_v|=d_G(v)/2$ we have $|E_v\cap (A\cap S)|=|E_v\cap (A\cap T)|$ and $|E_v\cap (S\cap T)|=|E_v\cap (E\setminus (S\cup T)|$. 
In other words, $A\cap S$ is a half graph of $A$, and $B\cap (S\cap T)$ is a half graph of $B$. Clearly, $S$ and $T$ determine $A,A\cap S,B, B\cap (S\cap T)$. But it is also true that $A,A\cap S,B, B\cap (S\cap T)$ determine $S$ and $T$ since $S=(A\cap S)\cup (S\cap T)$ and $T=(A\cap T)\cup (S\cap T)$. The number of quadruple $A,A\cap S,B, B\cap (S\cap T)$ is clearly $\sum_{A\subseteq E}h(A)h(E\setminus A)$. Hence
$$h(G)^2=\sum_{A\subseteq E}h(A)h(E\setminus A).$$
\medskip

In case of Eulerian orientations let $S$ and $T$ be two Eulerian orientations, and let $A$ be the set of edges where the orientations coincide and $B=E\setminus A$ be the remaining edges. Similarly to the previous discussion $S$ restricted to $A$ and $B$ gives an Eulerian orientation. The rest of the proof is the same.
\end{proof}

\begin{proof}[Proof of Theorem~\ref{bipartite cover}]
The proof is very similar to the proof of Theorem~\ref{recursion}. Since $G\times K_2=(V_0,V_1,E)$ is bipartite we have $\e(G\times K_2)=h(G\times K_2)$. Consider a half graph $S$ of $G\times K_2$. Let $\pi: G\times K_2\to G$ be the natural projection. For $k=0,1,2$ let  $A_k=\{e\in E\ |\ |\pi^{-1}(e)\cap S|=k\}$. Since $S$ was a half graph of $G\times K_2$ we get that for each vertex $v$ we have $|E_v\cap A_0|=|E_v\cap A_2|$. In other words, $A_0$ is a half graph of $A_0\cup A_2$. Let us orient an edge $(u,v)\in A_1$ from $u$ to $v$ if $(u_0,v_1)\in S$. Since $S$ was a half graph of $G\times K_2$ this gives an Eulerian orientation of the edges of $A_1$. Clearly, from the Eulerian orientation of $A_1$ and the half graph $A_0$ of $A_0\cup A_2$ we can immediately reconstruct $S$. Hence
$$h(G\times K_2)=\sum_{A_1\subseteq E}\e(A_1)h(A_0\cup A_2)=\sum_{A\subseteq E}\e(A)h(E\setminus A).$$

\end{proof}

\begin{proof}[Proof of Theorem~\ref{inequality}]
We prove the statement by induction on the number of edges of $G$. If the graph has no edge, then the statement is trivial. If it consists of a single cycle of length $k$ together with isolated vertices, then $\e(G)=2$ and $h(G)=0$ or $2$ depending on $k$ being odd or even. So in this case the theorem is true. If $G$ is different from a single cycle together with isolated vertices, then we use induction:
$$\e(G)^2-2\e(G)=\sum_{A\subseteq E \atop A\neq \emptyset ,E}\e(A)\e(E\setminus A)\geq \sum_{A\subseteq E \atop A\neq \emptyset ,E}h(A)h(E\setminus A)=h(G)^2-2h(G).$$
Since $\e(G)\geq 1$ and the function $x^2-2x$ is a monotone increasing function for $x\geq 1$ we get that $\e(G)\geq h(G)$.

In case of a bipartite graph we have equality by the bijection between subgraphs and orientations. If the graph $G$ is not bipartite, then it contains an odd cycle on an edge subset $A$ for which we have strict inequality $\e(A)>h(A)$. Then we have strict inequality in $\e(G)^2-2\e(G)>h(G)^2-2h(G)$ implying $\e(G)>h(G)$. Hence equality holds if and only if $G$ is bipartite.
\end{proof}

\section{Proof of Theorems~\ref{2-cover inequality1} and ~\ref{2-cover inequality2}} \label{2-cover}

In this section we prove Theorems~\ref{2-cover inequality1} and ~\ref{2-cover inequality2}.
The proofs rely on three observations, one of them is due to Schrijver relating the number of $\underline{r}$-orientations of a graph $G$ to the number of perfect matchings of a certain bipartite graph $G^*$ constructed from $G$. (In fact, we will slightly modify it, but still attribute it to Schrijver.) A similar observation connects the number of $\underline{r}$-factors of a graph $G$ to the number of perfect matchings of another graph $G^{**}$ constructed from $G$. The last observation is due to Csikv\'ari and gives an inequality between the number of perfect matchings of certain $2$-covers of a graph $G$.

\begin{Lemma}[Schrijver \cite{schrijver1983bounds}] \label{Euler-matchings} Let $G$ be a graph, and let $G^*$ be the following bipartite graph. On one side of the bipartite graph every vertex corresponds to an edge $e\in E(G)$. On the other side of the bipartite graph we take $r_v$ copies of each vertex $v$. Finally, an edge $e=(u,v)$ is adjacent to all copies of $u$ and $v$. Then
$$\prm(G^*)=\e_{\underline{r}}(G)\prod_{v\in V(G)}r_v!.$$ 
\end{Lemma}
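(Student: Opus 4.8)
The plan is to set up an explicit correspondence between the perfect matchings of $G^*$ and the $\underline{r}$-orientations of $G$ under which each $\underline{r}$-orientation is the image of exactly $\prod_{v}r_v!$ perfect matchings; summing over all $\underline{r}$-orientations then gives the identity. Throughout I may assume $r_v\geq 0$ for every $v$, since otherwise $\e_{\underline{r}}(G)=0$ and $G^*$ is not even defined. I would first dispose of a cardinality issue: the two sides of $G^*$ have sizes $|E(G)|$ and $\sum_{v}r_v$, so $G^*$ has no perfect matching unless these are equal; likewise, since the in-degrees of any orientation sum to $|E(G)|$, there is no $\underline{r}$-orientation unless $\sum_v r_v=|E(G)|$. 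Hence both sides vanish unless $\sum_v r_v=|E(G)|$, and from now on I assume this equality.

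Next I would describe the map from matchings to orientations. Given a perfect matching $M$ of $G^*$, every edge-vertex $e=(u,v)$ is matched by $M$ to a copy of exactly one of its two endpoints, these copies being the only neighbours of $e$ in $G^*$; orient the edge $e$ of $G$ towards that endpoint. I would then check that this is an $\underline{r}$-orientation: the in-degree of $v$ in the resulting orientation equals the number of edge-vertices matched by $M$ to a copy of $v$, and since $M$ is perfect all $r_v$ copies of $v$ are covered, so this number is exactly $r_v$.

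It then remains to count, for a fixed $\underline{r}$-orientation $O$, the perfect matchings $M$ that map to it. Such an $M$ must match the $r_v$ edges oriented into $v$ by $O$ bijectively onto the $r_v$ copies of $v$, for every $v$ simultaneously; conversely, any choice of such a bijection at each vertex produces a valid $M$. The $r_v!$ choices at different vertices are independent, and because each edge of $G$ is oriented into exactly one of its endpoints and $\sum_v r_v=|E(G)|$, the union of these bijections is automatically a perfect matching of $G^*$: every edge-vertex is covered exactly once and every vertex-copy is covered. Thus the fibre over $O$ has size $\prod_v r_v!$, and summing over all $\underline{r}$-orientations $O$ yields $\prm(G^*)=\e_{\underline{r}}(G)\prod_{v}r_v!$.

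I do not expect a serious obstacle here, as this is essentially Schrijver's bookkeeping device. The only points that need a little care are the two degenerate cases handled at the start and the verification that the per-vertex matchings counted by $\prod_v r_v!$ actually glue into a single global perfect matching of $G^*$ — which is precisely the step where the equality $\sum_v r_v=|E(G)|$ is used.
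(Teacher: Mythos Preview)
Your proof is correct and follows essentially the same approach as the paper: both set up the $1$-to-$\prod_v r_v!$ correspondence between $\underline{r}$-orientations and perfect matchings of $G^*$, with the paper phrasing it as a map from orientations to matchings and you phrasing it as the inverse map with a fibre count. Your version is in fact more careful, as you explicitly dispose of the degenerate case $\sum_v r_v\neq |E(G)|$ and verify that the per-vertex bijections glue to a global perfect matching, points the paper leaves implicit.
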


\begin{proof} Let $R=\prod_{v\in V(G)}r_v!$. There is an $1$ to $R$ map from the set of $\underline{r}$-orientations to the set of perfect matchings of $G^*$. Namely, if the edges $(u,v_i)\in E(G)$ for $i=1,\dots r_u$ are oriented towards $u$, then we take the union of  perfect matchings between $e_{uv_i}$ and the $r_u$ copies of $u$ to get a perfect matching of $G^*$.
\end{proof}

\noindent A similar lemma enable us to encode $h_{\underline{r}}$ via perfect matchings. A qualitative version of this lemma appeared in \cite{tutte1954short}.

\begin{Lemma}[Tutte \cite{tutte1954short}] \label{r-factors} Let $G$ be a graph, and let $G^{**}$ be the following graph. For each edge $e=(u,v)$ we introduce two vertices $e_{uv}$ and $e_{vu}$, and for each vertex $v$ we introduce $r_v$ copies of $v$. Then we connect $e_{uv}$ with $e_{vu}$, and we also connect the $r_u$ copies of $u$ with $e_{vu}$ for each $v\in N_G(u)$. Then
$$\prm(G^{**})=h_{\underline{r}}(G)\prod_{v\in V(G)}r_v!.$$
\end{Lemma}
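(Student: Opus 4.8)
The plan is to follow the proof of Lemma~\ref{Euler-matchings}: writing $R=\prod_{v\in V(G)}r_v!$, I will exhibit an explicit $R$-to-one surjection from the set of perfect matchings of $G^{**}$ onto the set of $\underline{r}$-factors of $G$.

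First I would carry out a local analysis at a single edge $e=(u,v)$ of $G$. In $G^{**}$ the vertex $e_{uv}$ is adjacent only to $e_{vu}$ and to the $r_v$ copies of $v$, while $e_{vu}$ is adjacent only to $e_{uv}$ and to the $r_u$ copies of $u$. Hence, in any perfect matching $M$ of $G^{**}$, exactly one of two mutually exclusive situations occurs at $e$: either $e_{uv}$ is matched to $e_{vu}$ (the ``middle edge'' is used, and then neither of these two vertices is matched to a copy of a vertex of $G$), or $e_{uv}$ is matched to a copy of $v$ and $e_{vu}$ is matched to a copy of $u$. In the second case I call $e$ \emph{selected by} $M$, and I let $F_M\subseteq E(G)$ denote the set of selected edges.

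Next I would read off the degree conditions. Every copy of a vertex $u$ must be matched by $M$, and its only neighbours are the ports $e_{vu}$ with $uv\in E(G)$; such a port is matched to a copy of $u$ if and only if the corresponding edge is selected. Therefore $M$ restricts to a bijection between the $r_u$ copies of $u$ and the set $\{\,e_{vu}: uv\in F_M\,\}$, which has $d_{F_M}(u)$ elements; this forces $d_{F_M}(u)=r_u$ for every $u$, so $F_M$ is an $\underline{r}$-factor. Conversely, given any $\underline{r}$-factor $F$, the middle edges $e_{uv}e_{vu}$ with $e\notin F$ are forced, and, independently at each vertex $u$, one may choose any of the $r_u!$ bijections between the $r_u$ copies of $u$ and the $r_u$ ports $\{\,e_{vu}: uv\in F\,\}$; every resulting matching is perfect with $F_M=F$, and every perfect matching with $F_M=F$ arises in this way. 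Hence the fibre of the map over $F$ has exactly $R$ elements, giving $\prm(G^{**})=h_{\underline{r}}(G)\prod_{v\in V(G)}r_v!$. (If some $r_v>d_G(v)$, then $G^{**}$ has no perfect matching and $G$ has no $\underline{r}$-factor, so both sides vanish and the identity still holds.)

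The argument involves no analytic difficulty; the one point that requires care is verifying that the local case analysis at each edge really is exhaustive and mutually exclusive, so that $F_M$ is well defined and the reconstruction of $M$ from $F$ together with the per-vertex bijections is both possible and unique.
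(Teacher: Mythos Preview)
Your proof is correct and follows essentially the same approach as the paper: both establish an $R$-to-one correspondence between perfect matchings of $G^{**}$ and $\underline{r}$-factors of $G$, where for each edge one either uses the middle edge $e_{uv}e_{vu}$ or matches the two ports to copies of the endpoints, and the factor of $R=\prod_v r_v!$ comes from the independent choice of bijection at each vertex. Your write-up is simply more detailed than the paper's (and phrased as a surjection in the opposite direction), but the content is the same.
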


\begin{proof} Let $R=\prod_{v\in V(G)}r_v!$. Again there is an $1$ to $R$ map from the set of $\underline{r}$-factors to the set of perfect matchings of $G^{**}$. Namely, if the edges $(u,v_i)\in E(G)$ for $i=1,\dots r_u$ are in the $\underline{r}$-factor, then to get a perfect matching of $G^{**}$, we take the union of  perfect matchings between $e_{v_iu}$ and the $r_u$ copies of $u$ together with those edges $(e_{xy},e_{yx})$ for which $(x,y)\in E(G)$, but is not in the $\underline{r}$-factor.
\end{proof}

\noindent Next we need a lemma that relates perfect matchings with covers.

\begin{Lemma}[Csikv\'ari \cite{csikvari2017lower,csikvari2016extremal}] \label{2-cover matchings} Let $G$ be a graph, and let $H$ be an arbitrary $2$-cover of $G$. Then
$$\prm(H)\leq \prm(G\times K_2).$$
In particular, if $G$ is bipartite, then $\prm(H)\leq \prm(G)^2$.
\end{Lemma}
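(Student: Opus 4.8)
The plan is to prove a combinatorial identity for $\prm(H)$ that expresses it as a sum over "cycle--cover plus matching" structures of the base graph $G$, weighted by purely \emph{local} parity conditions coming from the signing that encodes $H$, and then to observe that the all-minus signing (which produces $G\times K_2$) satisfies every one of those parity conditions, so that its sum dominates $\prm(H)$ term by term. Throughout I identify a $2$-cover $H$ with a signing $s\colon E(G)\to\{+,-\}$ as in the introduction, and I write $\pi\colon H\to G$ for the projection.

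First I would analyse the structure of a perfect matching $M$ of $H$ through $\pi$. For each edge $e\in E(G)$ we have $|M\cap\pi^{-1}(e)|\in\{0,1,2\}$; call $e$ \emph{missed}, \emph{singly covered}, or \emph{doubly covered} accordingly, and let $S$ and $D$ be the sets of singly and doubly covered edges. A short local computation at a vertex $v$ shows that the two copies $v_0,v_1$ are matched in $M$ either by the two lifts of one doubly covered edge at $v$ (in which case no other edge at $v$ lies in $S\cup D$), or by single lifts of exactly two singly covered edges at $v$. Hence $V(G)$ splits into $D$-vertices and $S$-vertices; $D$ restricts to a perfect matching of the set of $D$-vertices, and $S$ is $2$-regular on the $S$-vertices, i.e.\ a vertex-disjoint union of cycles. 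So every perfect matching $M$ of $H$ produces a pair $(C,N)$, where $C\subseteq E(G)$ is a vertex-disjoint union of cycles (possibly empty) and $N$ is a perfect matching of $G-V(C)$.

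Next I would count, for a fixed such pair $(C,N)$, how many $M$ give it. The doubly covered part is forced (both lifts of each edge of $N$ must be used), and it is vertex-disjoint from $C$, so the count factors over the connected components of $C$. For a single cycle $C_i$ of length $k$, choosing one lift of each of its edges so as to obtain a perfect matching of $V(C_i)\times\{0,1\}$ amounts to a consistency condition obtained by going once around $C_i$: letting $m(C_i)$ be the number of edges of $C_i$ with a $-$ sign, the loop closes up precisely when $m(C_i)\equiv k\pmod 2$ (equivalently: $C_i$ lifts either to two $k$-cycles, which can be matched in the required way only when $k$ is even, or to one $2k$-cycle, which works only when $k$ is odd), and in the admissible case there are exactly $2$ choices. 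This gives
$$\prm(H)=\sum_{C}\Bigl(\,\prod_{C_i\subseteq C}2\cdot\bigl[\,m(C_i)\equiv |C_i|\pmod 2\,\bigr]\Bigr)\,\prm\bigl(G-V(C)\bigr),$$
the sum over all vertex-disjoint unions of cycles $C$ in $G$. For $H=G\times K_2$ every edge carries a $-$, so $m(C_i)=|C_i|$, every bracket equals $1$, and $\prm(G\times K_2)=\sum_C 2^{c(C)}\prm(G-V(C))$ with $c(C)$ the number of cycles of $C$. Since each summand for $\prm(H)$ is at most the corresponding summand for $\prm(G\times K_2)$, we conclude $\prm(H)\le\prm(G\times K_2)$; and when $G$ is bipartite, $G\times K_2=G\cup G$, so the right-hand side equals $\prm(G)^2$.

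The main obstacle is the middle step: pinning down the vertex-local dichotomy, checking that the number of perfect matchings of $H$ really does factorize cleanly over the cycles of $C$ and over $C$ versus $N$ with no hidden interaction, and getting the cycle-lifting parity condition exactly right. Once that bookkeeping is in place the dominance of the all-minus signing is immediate, since it is the unique signing for which all of the local indicators are automatically $1$.
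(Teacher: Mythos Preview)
Your proposal is correct and follows essentially the same approach as the paper's sketch: project a perfect matching of $H$ to $G$ to obtain a $2$-regular configuration of double edges and cycles, and then compare the number of lifts cycle by cycle. Your version is more explicit than the paper's---you write down the exact parity criterion $m(C_i)\equiv |C_i|\pmod 2$ and the resulting closed formula for $\prm(H)$---but the underlying idea, including the observation that in $G\times K_2$ every cycle lifts in exactly two ways, is identical.
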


\begin{proof}[Sketch of the proof]
Let us project the edges of a perfect matching of a $2$-cover $H$ to the graph $G$. The obtained configuration consists of cycles and double-edges, that is, two edges projected to the same edge. (Every degree must be $2$ in the obtained configuration.) For such a configuration we can count the number of preimages. Each cycle can be lifted in at most $2$ ways since the preimage of one edge determines the preimage of the subsequent edges in the cycle. It may occur though that we cannot close the cycle. (This happens for instance if try to lift a $3$-cycle in the union of two $3$-cycles.) On the other hand, it is easy to see that on $G\times K_2$ every cycle can be lifted in exactly $2$ ways. This means that every configuration has at least as many preimages on $G\times K_2$ than on another $2$-cover $H$.

\end{proof}

\begin{proof}[Proof of Theorem~\ref{2-cover inequality1}]
Let $H$ be an arbitrary cover of the graph $G$. Let us construct the bipartite graphs $G^*$ and $H^*$ of Lemma~\ref{Euler-matchings}. Observe that $H^*$ is also a $2$-cover of $G^*$, and so by Lemma~\ref{2-cover matchings} we have $\prm(G^*)^2\geq \prm(H^*)$. Using Lemma~\ref{Euler-matchings} we have
$$\prm(G^*)^2=\e_{\underline{r}}(G)^2\left(\prod_{v\in V(G)}r_v!\right)^2$$
and
$$\prm(H^*)=\e_{\underline{r}}(H)\prod_{v\in V(H)}r_v!=\e_{\underline{r}}(H)\left(\prod_{v\in V(G)}r_v!\right)^2.$$
Hence $\e_{\underline{r}}(G)^2\geq \e_{\underline{r}}(H)$.
\end{proof}

\begin{Rem} An interesting application of Theorem~\ref{2-cover inequality1} is the following.
Let $T_{n,m}$ be the toroidal grid of size $n\times m$, that is, a grid of size $n\times m$ closed in a toroidal way to make it $4$-regular. Then $\e(T_{n,m})\geq \left(\frac{4}{3}\right)^{3nm/2}$. This can can be seen as follows. Lieb \cite{lieb2004residual} showed that $\lim_{n,m\to \infty}\e(T_{n,m})^{1/nm}=\left(\frac{4}{3}\right)^{3/2}$. On the other hand, $T_{2n,m}$ and $T_{n,2m}$ are $2$-covers of $T_{n,m}$ so $\e(T_{n,m})^2\geq \e(T_{2n,m})$ and $\e(T_{n,2m})$ so it is necessary that $\e(T_{n,m})^{1/nm}\geq \left(\frac{4}{3}\right)^{3/2}$ for every $n,m$.
\end{Rem}

\begin{proof}[Proof of Theorem~\ref{2-cover inequality2}]
Let $H$ be an arbitrary cover of the graph $G$. Let us construct the  graphs $G^{**}$ and $H^{**}$ of Lemma~\ref{r-factors}. Observe that $H^{**}$ is also a $2$-cover of $G^{**}$, and $(G\times K_2)^{**}=G^{**}\times K_2$ and so by Lemma~\ref{2-cover matchings} we have $\prm((G\times K_2)^{**})\geq \prm(H^{**})$. Using Lemma~\ref{r-factors} we have
$$\prm((G\times K_2)^{**})=h_{\underline{r}}(G\times K_2)\prod_{v\in V(G\times K_2)}r_v!=h_{\underline{r}}(G\times K_2)\left(\prod_{v\in V(G)}r_v!\right)^2$$
and
$$\prm(H^{**})=h_{\underline{r}}(H)\prod_{v\in V(H)}r_v!=h_{\underline{r}}(H)\left(\prod_{v\in V(G)}r_v!\right)^2.$$
Hence $h_{\underline{r}}(G\times K_2)\geq h_{\underline{r}}(H)$.

\end{proof}

\section{general 2-cover} \label{general 2-cover}

In this section we prove Theorems~\ref{mixed 2-lift} and ~\ref{mixed2}.

\begin{proof}[Proof of Theorem~\ref{mixed 2-lift}]

Consider a balanced factorientation $S$ of the graph $H$. Take the natural projection from $H$ to $G$, and let $A$ be the
set of edges for which the projected edges coincide, that is, both edges are oriented in the same way if there is an $o$ on that edge, or they are both or neither in the subgraph if there is an $s$ on that edge.

Let $B=E\setminus A$. The factorientation $S$ was balanced so for all
vertex $v$ the mixed degrees of $v_0$ and $v_1$ are both $d_G(v)/2$. This means that  after the natural projection -- and the doubling of the original edges of the graph-- the mixed degree of vertex $v$ is $d_G(v)$. If an edge is in $A$, then it contributes either $0$ or $2$ to the mixed degree of $v$, otherwise it contributes $1$. Thus there must be equal number of $0$'s and $2$'s contributions, which means that if we restrict the graph to $A$, then we also get a balanced factorientation. 

For an edge $(u, v)\in B$ if $(u,v)$ has a plus sign on it, then it contributes  the same amount to the mixed degree of $u$ and $v$ as it contributed to  the mixed degrees of $u_0$ and $v_0$ with the edge $(u_0,v_0)$ in $S$. 

If $(u,v)$ has a minus sign on it, then in $(u,v)$ change $s$ to $o$ and vice verse. If it was $o$ before and the orientations were $u_0\to v_1$ and $v_0\to u_1$, then we do not put the edge  $(u,v)$ into the subgraph. If it was $o$ before and the orientations were $u_1\to v_0$ and $v_1\to u_0$, then we do  put the edge  $(u,v)$ into the subgraph. Note that the contribution of these edges to the mixed degree of a vertex $u$ is the same as the contribution of the original edges to the vertex $u_0$.

Similarly, if it was an $s$ before, and $(u_0,v_1)$ was in the subgraph, but $(u_1,v_0)$ was not, then orient the edge $(u,v)$ from $v$ to $u$. This way it is again true that the contribution of these edges  to the mixed degree of a vertex $u$ is the same as the contribution of the original edges to the vertex $u_0$.

So for all $(u, v)\in B$ we made sure that the contribution of the orientation or factor to the mixed degree of a vertex $u$ is the same contribution as the original edges to the mixed degree of the vertex $u_0$. Finally, observe that since the mixed degree of $u_0$ and $u_1$ was the same, and the edges of $A$ contributed the same to the mixed degrees, it is necessary that the mixed degree contributed by the edges of $B$ to the vertex $u$ is exactly $d_B(u)/2$. This means that the constructed factorientation is balanced if we restrict to the edges of $B$.

Finally, observe that if we get a balanced factorientation of $A$ and $\overline{E\setminus A}$ then we can easily get back the balanced factorientation of $H$. This finishes the proof. 
\end{proof}

\begin{proof}[Proof of Theorem~\ref{mixed2}]
The proof is practically the same as the proof of Theorem~\ref{inequality}. We use induction to the number of edges. We have
$$g(G)^2=g(G\cup G)=\sum_{A\subseteq E}g(A)g(\overline{E\setminus A})=\sum_{A\subseteq E}g(A)g(E\setminus A)$$
since $G\cup G$ corresponds to the $2$-cover with only $+$ signs. By induction we have \\ $g(A)\leq \e(A)$ if $A\neq E$. Hence $g(G)^2-2g(G)\leq \e(G)^2-2\e(G)$ which implies that $g(G)\leq \e(G)$.
\end{proof}

\section{Proof of Theorem~\ref{extra}}

In this section we prove Theorem~\ref{extra}.

\begin{proof}[Proof of Theorem~\ref{extra}]
We prove the statement by induction on the number of edges. The statement is true for cycles.

Let $\underline{r}\in \mathbb{Z}^{V(G)}$ be an arbitrary vector and let $\underline{d} \in \mathbb{Z}^{V(G)}$ be the vector where $d_v=d_G(v)$. Notice that $\e_{\underline{r} }(G)=\e_{\underline{d}-\underline{r} }(G)$, because if we change the direction of all edges in an $\underline{r}$-orientation, then we get a $(\underline{d}-\underline{r})$-orientation. 
Consider $S$ and $T$ determining an $\underline{r}$-orientation and a $(\underline{d}-\underline{r})$-orientation, respectively. Let $A \subseteq E$ be the set of edges where the two orientations coincide, and let $B=E \setminus A$ be the remaining edges. We claim that $A \cap S$ is an Eulerian orientation of $A$. To see this consider a vertex $v$ of $G$. Let $d_S^A(v)$ be the in-degree of $v$ in the orientation $S$ restricted to the set $A$. We similarly define $d_S^B(v)$, $d_T^A(v)$ and $d_T^B(v)$. Finally, let $d^A(v)$ be the degree of $v$ in the graph $(V,A)$. We similarly define $d^B(v)$. Then the in-degree of $v$ in the orientation $S$ is $r_v=d_S^A(v)+d_S^B(v)$. The in-degree of $v$ in the orientation $T$ is $d_v-r_v=d_T^A(v)+d_T^B(v)$. By the definition of $A$ we have $d_S^A(v)=d_T^A(v)$.  By the definition of $B$ we have $d_S^B(v)=d^B(v)-d_T^B(v)$. Hence
\begin{align*}
d^A(v)&=d_v-d^B(v)\\
      &=r_v+(d_v-r_v)-d^B(v)\\
			&=d_S^A(v)+d_S^B(v)+d_T^A(v)+d_T^B(v)-d^B(v)\\
			&=d_S^A(v)+d_T^A(v)\\
			&=2d_S^A(v).
\end{align*}
Since this is true for all vertex $v$ of $G$, we get that $A \cap S$ is an Eulerian orientation of $A$. From this it is easy to see that $S \cap B$ is an $(\underline{r}-\underline{d_A}/2)$-orientation of $B$. If we choose $S \cap B$ that determines $T \cap B$,  thus we get that 
$$\e_{\underline{r} }(G)\e_{\underline{d}-\underline{r} }(G)=\sum_{A\subseteq E}\e(A)\e_{\underline{r}-\underline{d_A}/2 }(E\setminus A)$$
The degree of a vertex $v$ in $E \setminus A$ is $d_G(v)-d_A(v)$ so from induction we know that $\e_{\underline{r}-\underline{d_A}/2 }(E \setminus A) \leq \e_{\underline{d}/2-\underline{d_A}/2 } (E \setminus A)$ if $A \neq \emptyset$. Thus
\begin{align*}
\e_{\underline{r} }(G)\e_{\underline{d}-\underline{r} }(G)-\e_{\underline{r} }(G)&=\sum_{A\subseteq E \atop A \neq \emptyset}\e(A)\e_{\underline{r}-\underline{d_A}/2 }(E\setminus A)\\
&\leq \sum_{A\subseteq E \atop A \neq \emptyset}\e(A)\e_{\underline{d}/2-\underline{d_A}/2 }(E\setminus A)\\
&=\e_{\underline{d}/2 }(G)^2-\e_{\underline{d}/2 }(G).
\end{align*}
In the last step we used Theorem~\ref{recursion}. Recall that  $\e_{\underline{r}}(G)=\e_{\underline{d}-\underline{r} }(G)$. Hence $\e_{\underline{r} }(G)^2-\e_{\underline{r} }(G)\leq \e_{\underline{d}/2 }(G)^2-\e_{\underline{d}/2 }(G)$. This implies that $\e_{\underline{r}}(G)\leq \e_{\underline{d}/2 }(G)=\e(G)$.
\end{proof}

\section{Open problem}

We end this paper with an open problem.

\begin{Conj}
Let $G$ be an Eulerian graph, and let $H$ be a $k$-cover of $G$. Then $\e(G)^k\geq \e(H)$.
\end{Conj}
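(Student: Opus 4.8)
The plan is to mimic the strategy that worked for $2$-covers, namely reduce the statement about Eulerian orientations to a statement about perfect matchings via Schrijver's Lemma~\ref{Euler-matchings}, and then try to establish the matching analogue: for a $k$-cover $H$ of a graph $G$, one would want $\prm(H)\le \prm(G)^k$ whenever $G$ is such that this makes sense, or more precisely $\prm(H)\le \prm((G^*)^{\otimes k})$ for an appropriate ``most bipartite'' $k$-cover. Concretely, given $\underline r$ with $r_v=d_G(v)/2$, form $G^*$ as in Lemma~\ref{Euler-matchings}; then $H^*$ is a $k$-cover of $G^*$, and by Lemma~\ref{Euler-matchings} applied to both $G$ and $H$ we have $\prm(G^*)=\e(G)\prod_v r_v!$ and $\prm(H^*)=\e(H)\prod_v r_v!$ (the product over $V(H)$ of $r_v!$ equals the $k$-th power of the product over $V(G)$, which is exactly what we need on the nose after raising to the $k$-th power). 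So it suffices to prove $\prm(H^*)\le \prm(G^*)^k$ for the bipartite graph $G^*$, and then the conjecture would follow for all Eulerian $G$.

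The second step is therefore the genuinely new content: prove that if $K$ is a \emph{bipartite} graph then every $k$-cover $L$ of $K$ satisfies $\prm(L)\le \prm(K)^k$. I would attempt the same projection argument as in the sketch of Lemma~\ref{2-cover matchings}: project a perfect matching of $L$ down to $K$; the image is a subgraph of $K$ in which every vertex has degree dividing... no, rather a multiset of edges where each vertex is covered exactly $k$ times, i.e.\ a ``$k$-regular sub-multigraph'' of $K$, which decomposes (since $K$ is bipartite, hence the multigraph is bipartite and $k$-regular) into $k$ perfect matchings of $K$ counted with multiplicity. One then counts, for a fixed such configuration $C$, the number of perfect matchings of $L$ projecting to $C$ and compares it with the number of ways to realize $C$ as an ordered $k$-tuple of matchings of $K$ whose union is $C$. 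The key point to push through is that on the ``trivial'' cover $K^{\text{(trivial?)}}$ — here the relevant extremal cover, which for bipartite base should be (a cover equivalent to) $k$ disjoint copies, since $K$ bipartite — each cycle/component of $C$ lifts in the maximum possible number of ways, while on a general cover $L$ some lifts may fail to close up; a permutation-group / monodromy bookkeeping around each cycle of $C$ should give the inequality factor by factor.

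The main obstacle, and the reason this is only a conjecture, is precisely this last combinatorial count for general $k$: for $k=2$ a cycle lifts in at most $2$ ways and exactly $2$ ways over $G\times K_2$, so the comparison is a clean ``$\le 2$ versus $=2$'' on each cycle. For general $k$ the set of lifts of a cycle is governed by a product of permutations in $S_k$ (the monodromy of the cycle), the number of closed lifts is the number of fixed points of that product, and the number of ordered decompositions of the projected configuration into $k$ matchings is a more delicate count; it is not obvious that the fixed-point count is maximized exactly at the identity monodromy, nor that the ``disjoint copies'' cover is the maximizer once the configuration $C$ has repeated edges (multi-edges in $C$ interact with the permutation structure in a way that has no $k=2$ analogue). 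I would expect that handling configurations with multi-edges, and proving the per-component extremality of the split cover by a convexity or rearrangement argument on $S_k$, is where the real work — and possibly a counterexample — lies. An alternative route worth trying in parallel is an entropy / Bethe-approximation argument à la Vontobel–Ruozzi, showing $\frac1k\log\e(H)\le \log\e(G)$ by interpreting $\log\e(G)$ as a Bethe-type functional that dominates its cover averages; but making that rigorous for permanents-with-prescribed-degrees has the same structural difficulty hidden inside the local marginal optimization.
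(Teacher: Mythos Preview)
This statement appears in the paper as an open \emph{conjecture}, not a theorem; the paper gives no proof of it, so there is nothing to compare your attempt against. What you have written is, as you yourself flag, not a proof but a strategy together with an honest diagnosis of where it stalls.

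That said, your reduction is sound and is precisely the natural extension of the paper's own method for the case $k=2$ (Theorem~\ref{2-cover inequality1}). The observation that $H^*$ is a $k$-cover of the bipartite graph $G^*$, together with the factorial bookkeeping from Lemma~\ref{Euler-matchings}, correctly reduces the conjecture to the statement that for a bipartite graph $K$ and any $k$-cover $L$ of $K$ one has $\prm(L)\le \prm(K)^k$. For $k=2$ this is exactly Lemma~\ref{2-cover matchings} (using that $K$ bipartite forces $K\times K_2\cong K\cup K$), so your plan recovers the paper's proof in that case.

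The gap you identify is genuine and is, as far as I know, the reason the statement remains a conjecture. For $k\ge 3$ the projection of a perfect matching of $L$ to $K$ gives a $k$-regular bipartite multigraph, and the count of its preimages is governed by the cycle structure of products of the edge-permutations in $S_k$; unlike the $k=2$ case, there is no simple ``at most $2$ versus exactly $2$'' dichotomy per cycle, and the interaction of repeated edges with the monodromy is exactly where a naive per-component comparison breaks down. Your alternative suggestion via Bethe/entropy methods is also reasonable to mention, but it runs into essentially the same obstruction. In short: your reduction is correct and matches the paper's philosophy, but the residual matching inequality for $k$-covers is itself open, so the proposal does not close the problem.
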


\noindent \textbf{Acknowledgments.} The first author thank L\'aszl\'o Kozma for calling attention to the paper \cite{kozma2013shattering}. The authors also thank the reviewers for helpful comments.

\bibliographystyle{siamnodash}

\bibliography{covers_Eulerian_orientations_half_graphs}

\end{document}